\newcommand*\colvec[1]{
	\global\colveccount#1
	\begin{pmatrix}
		\colvecnext
	}
	\def\colvecnext#1{
		#1
		\global\advance\colveccount-1
		\ifnum\colveccount>0
		\\
		\expandafter\colvecnext
		\else
	\end{pmatrix}
	\fi
}
\newcommand{\ndN}{\mathbb{N}}
\newcommand{\ndZ}{\mathbb{Z}}
\newcommand{\ndR}{\mathbb{R}}
\renewcommand{\Pr}[1]{\mathbb{P}(#1)}
\newcommand{\Prb}[1]{\mathbb{P}\left(#1\right)}
\newcommand{\Ex}[1]{\mathbb{E}[#1]}
\newcommand{\Exb}[1]{\mathbb{E}\left[#1\right]}
\newcommand{\Va}[1]{\mathbb{V}[#1]}
\newcommand{\one}{{\mathbbm{1}}}
\newcommand{\convdis}{\,{\buildrel d \over \longrightarrow}\,}
\newcommand{\convd}{\,{\buildrel d \over \longrightarrow}\,}
\newcommand{\convp}{\,{\buildrel p \over \longrightarrow}\,}
\newcommand{\convas}{\,{\buildrel a.s. \over \longrightarrow}\,}
\newcommand{\eqdist}{\,{\buildrel d \over =}\,}
\newcommand{\cE}{\mathcal{E}}
\newcommand{\cF}{\mathcal{F}}
\newcommand{\cN}{\mathcal{N}}
\newcommand{\cR}{\mathcal{R}}
\newcommand{\mfL}{\mathfrak{L}}
\newcommand{\mfX}{\mathfrak{X}}
\begin{document}

\title{Rerooting multi-type branching trees: the infinite spine case%\thanks{Grants or other notes
%about the article that should go on the front page should be
%placed here. General acknowledgments should be placed at the end of the article.}
}
%\subtitle{Do you have a subtitle?\\ If so, write it here}

%\titlerunning{Short form of title}        % if too long for running head

\author{Benedikt Stufler}%        \and
%        Second Author %etc.
%}

%\authorrunning{Short form of author list} % if too long for running head

\institute{Benedikt Stufler \at
              Vienna University of Technology \\
              Institute of
Discrete Mathematics and Geometry \\
E-mail: \includegraphics{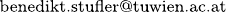}
}

%\date{Received: date / Accepted: date}
% The correct dates will be entered by the editor

\maketitle

\begin{abstract}
We prove local convergence results of rerooted conditioned multi-type Galton--Watson trees. The limit objects are multitype variants of the random sin-tree constructed by Aldous (1991), and differ according to which types recur infinitely often along the backwards growing spine.
\keywords{Multi-type Galton--Watson trees \and fringe distributions \and local convergence}
% \PACS{PACS code1 \and PACS code2 \and more}
% \subclass{MSC code1 \and MSC code2 \and more}
\end{abstract}

\section{Introduction}

The study of multi-type branching processes has received growing attention in recent literature, see~\cite{MR2469338,MR3254703,MR3476213,Deraphelis2017,MR3803914,MR3769811,vatutin_wachtel_2018,AHL_2018__1__149_0}. The reducible case received particular attention in the line of research by by~\cite{MR1601709,MR3254703,MR3431699,MR3488785,MR3568763,MR3646473,MR3860005}. Multi-type Galton--Watson trees are also related to numerous examples of random graphs and  discrete structures, see~\cite{MR2509622,2017arXiv170408714R,StEJC2018,2019arXiv190610355S,quenched}.\footnote{The results of \cite{quenched} were initially part of the present work. The paper was split during the review process following a referee's recommendation.}

In the present work, we prove concentration inequalities for the number of extended fringe subtrees for conditioned multi-type Galton--Watson trees, see Lemma~\ref{le:multfringe}. This allows us to establish  local convergence for conditioned multi-type Galton--Watson trees rerooted at a random location under general assumptions, see Theorems~\ref{te:convfringe},~\ref{te:convfringe2}, and~\ref{te:convfringe3}.
The limit objects are  multi-type generalization of Aldous' invariant random sin-tree, and hence generalize similar objects for monotype trees, see \cite{MR1102319,MR2908619,rerootedstufler2019}. Such trees consist of a root-vertex with an infinite line of ancestors, called the spine, from which further random trees branch off. Depending on the branching mechanism, certain types recur infinitely often along the spine, and others do not. 

We apply our results to the following four settings:
\begin{enumerate}
	\item Sesqui-type trees, whose offspring distribution is critical and has finite variance, conditioned having a total number $n$ of vertices, regardless of their type. See Theorem~\ref{te:daone}.
	\item Reducible multi-type Galton--Watson trees conditioned on having $n$ vertices. See Theorem~\ref{te:transferred}.
	\item Irreducible regular critical multi-type Galton--Watson trees conditioned on the event that a fixed linear combination of the sub-populations by type equals $n$. See Theorem~\ref{te:datwo}.
	\item Irreducible critical multi-type Galton--Watson trees conditioned on having a vector $\bm{k}(n) \in \ndN_0^d$ as total population by types. See Theorem~\ref{te:dathree}.
\end{enumerate}

% In the present work, we give an application to models of random weighted  planar maps. 

%We start by strengthening the annealed local convergence result for face-weighted regular critical planar maps by \cite{MR3769811} to quenched convergence, see Theorem~\ref{te:regcritquench}. The proof goes by showing quenched convergence of the mobiles encoding weighted planar maps to a random infinite mobile with a spine that grows backwards. The bijection by \cite{MR2097335} and the mapping theorem transfer this to a quenched limit for face-weighted planar maps. Hence we use the proof strategy  by \cite{MR3769811} to transfer local convergence of mobiles to local convergence of maps, but instead of starting with a local convergence result for the vicinity of the root of the mobile, we use  quenched convergence for the vicinity of a random vertex in the mobile.

% As an application,  we deduce quenched local convergence of the random planar map $\mM_n^t$ with  $n$ edges and a positive weight $t>0$ at vertices.  That is, $\mM_n^t$ assumes a map $M$ with $n$ edges with probability proportional to $t^{\ve(M)}$, with $\ve(M)$ denoting the number of vertices of $M$. See Theorem~\ref{te:quenchmaptt}. The proof goes by passing to the dual map of a special instance of a random face-weighted map. The vertex weighted random planar map $\mM_n^t$ is related to the study of uniform random planar graphs, see~\cite{MR3068033,MR2735332}. We  apply the  local convergence of $\mM_n^t$  in the subsequent paper~\cite{2019arXiv190804850S} to deduce local convergence of the uniform random planar graph.

\section*{Notation}
We let  $\ndN_0 = \{0,1,2,\ldots\}$ denote the collection of non-negative integers. The random variables in this paper are either canonical or defined some  probability space $(\Omega,\cF,\mathbb{P})$. The law of a random variable $X: \Omega \to S$ with values in some measurable space $S$ is denoted by $\mfL(X)$.  If $Y: \Omega \to S'$ is a random variable with values in some measurable space $S'$, we let $\mfL(X \mid Y)$ denote the conditional law of $X$ given $Y$.  All unspecified limits are taken as $n \to \infty$.  Convergence in probability and distribution are denoted by $\convp$ and $\convd$. Almost sure convergence is denoted by $\convas$.
%For two sequences $(X_n)_n$ and $(Y_n)_n$ of random variables with values in $S$ we write $X_n \atv Y_n$ if their total variation distance $d_{\textsc{TV}}(X_n,Y_n) = \sup_{A \in \mfB(S)} | \Pr{X_n \in A} - \Pr{Y_n \in A}|$ tends to zero.
We say an event holds with high probability if its probability tends to $1$ as $n$ becomes large. 
For any sequence $a_n>0$ we let $o_p(a_n)$ denote an unspecified random variable $Z_n$ such that $Z_n / a_n \convp 0$. Likewise, $O_p(a_n)$ denotes a random variable $Z_n$ such that $Z_n / a_n$ is stochastically bounded. %We say a random variable $X$ is heavy-tailed, if $\Ex{\exp(tX)} = \infty$ for all $t>0$. If $X$ is a random non-negative integer, then this is equivalent to stating that the support $\mathrm{supp}(X) = \{n \in \ndN_0 \mid \Pr{X=n}>0\}$ is infinite and $\Pr{X=n} = \exp(o(n))$ as $n \in \mathrm{supp}(X)$ tends to infinity.

\section*{Index of terminology}

\noindent The following list summarizes frequently used terminology.% in this section.

%\begin{center}
	%	\begin{tabular}{@{}lp{308pt}@{}}	
	\begin{tabular}{p{0.1\linewidth} p{0.75\linewidth}} %{@{}lp{358pt}@{}}
%		asdf & asdf asdf ads fasdf asdf asdf asdf asdf sda asdf asdf asdf asdf asdf asdf asdf asdf \\
		$\mathfrak{G}$ & A collection of vertex types, page~\pageref{de:type}.\\[2pt]
		$\bm{\sigma}$ & An ordered  $\mathfrak{G}$-offspring distribution with $\bm{\sigma} = (\bm{\sigma}_i)_{i \in \mathfrak{G}}$, page~\pageref{de:offspring}.\\[2pt]
		$\bm{\xi}$ & The unordered $\mathfrak{G}$-offspring distribution  $\bm{\xi} = (\bm{\xi}_i)_{i \in \mathfrak{G}}$ corresponding to $\bm{\sigma}$, page~\pageref{de:offspring}.\\[2pt]
		$\#_i(\cdot)$ & Number of vertices of type $i \in \mathfrak{G}$, page~\pageref{de:numi}.\\[2pt]
		$\#(\cdot)$ & Total number of vertices, page~\pageref{de:num}.\\[2pt]
		$\bm{T}$ & A $\bm{\sigma}$-Galton--Watson tree with random root type $\alpha$, page~\pageref{de:bmt}.\\[2pt]
		$\bm{T}^\alpha$ & Like $\bm{T}$, but non-root vertices of type $\kappa$ receive no offspring, page~\pageref{de:ta}.\\[2pt]
		$\bm{T}^\kappa$ & Like $\bm{T}^\alpha$, but root has type $\kappa$, page~\pageref{de:tk}.%\\[2pt]
							\end{tabular}
	
\begin{tabular}{p{0.1\linewidth} p{0.75\linewidth}} %{@{}lp{358pt}@{}}
		$N_T(\cdot)$ & Number of occurrences of some tree $T$ as a fringe subtree, page~\pageref{de:nt}.\\[2pt]
		$\bm{T}(\kappa)$ & Like $\bm{T}$, but root has type~$\kappa$, page~\pageref{de:tkappa}.\\[2pt]
		$\bm{T}_n$ & The tree $\bm{T}$ conditioned on some event $\cE_n$, page~\pageref{de:tn}.\\[2pt]
		$\hat{\bm{T}}^\kappa$ & A random tree with a marked leaf of type~$\kappa$. Distributed like  $\bm{T}^\kappa$ biased by the number of vertices with type~$\kappa$, page~\pageref{de:thk}.\\[2pt]
		$\hat{\bm{T}}(\kappa)$ & A random infinite tree with a marked vertex of type~$\kappa$ and a spine that grows backwards, page~\pageref{de:htk}.\\[2pt]
		$\hat{\bm{T}}^{\kappa,\gamma}$ & A random tree with root type~$\kappa$ and a marked vertex of type $\gamma$. Obtained by biasing $\bm{T}^{\kappa}$ by the number of vertices of type~$\gamma$, page~\pageref{de:tkg}.\\[2pt]
		$\hat{\bm{T}}(\kappa, \gamma)$ & A random infinite tree with a marked vertex of type~$\gamma$ and a spine that grows backwards, page~\pageref{de:tkg2}.
	\end{tabular}        
	
%\end{center}

\vspace{1 \baselineskip}

\section{Basic definitions}

\subsection{Multi-type plane trees}

Suppose we are given a countable non-empty set \label{de:type}$\mathfrak{G}$ whose elements we call \emph{types}. A \emph{$\mathfrak{G}$-type plane tree} is a plane tree together with a map that assigns to each vertex  a type from $\mathfrak{G}$. 

In the present work we only consider trees where each vertex has a finite number of children. Such trees are called \emph{locally finite}. In a locally finite $\mathfrak{G}$-type tree,  the offspring of any vertex is encoded by a word from the free monoid 
\begin{align}
	\langle \mathfrak{G} \rangle = \cup_{n \ge 0} \mathfrak{G}^n.
\end{align}
Here $\mathfrak{G}^0 = \{\emptyset\}$ corresponds to the case where there is no offspring at all.

To any word from $\langle \mathfrak{G} \rangle$ we may assign a vector from the coproduct $\ndN_0^{(\mathfrak{G})}$, that is, the collection of all functions $\mathfrak{G} \to \ndN_0$ that are zero everywhere except for a finite number of types. Here the $j$th coordinate of the vector corresponds to the number of occurrences of a type $j \in \mathfrak{G}$ in the word. (For example, if $\mathfrak{G} = \{a,b,c\}$ is a three-element set, then the word $(a,c,c,a,b,a) \in \langle \mathfrak{G} \rangle$ corresponds to the vector $(x_i)_{i \in \{a,b,c\}} \in \mathbb{N}_0^{(\mathfrak{G})}$ with $x_a = 3$, $x_b = 1$, and $x_c = 2$.) 

For any $\mathfrak{G}$-tree $T$ and any type $i\in \mathfrak{G}$ we let 
\begin{align}
	\label{de:numi}
	\#_i T \in \ndN_0 \cup \{\infty\}
\end{align}  denote the number of vertices of type $i$ in $T$. Furthermore, 
\begin{align}
	\label{de:num}
	\#T := \sum_{i \in \mathfrak{G}} \#_i T
\end{align}
denotes the total number of vertices. We say $T$ is \emph{finite}, if $\# T < \infty$.  %For any subset $\mathfrak{G}_0 \subset \mathfrak{G}$ we set
%\begin{align}
%	\#_{\mathfrak{G}_0} T = \sum_{i \in \mathfrak{G}_0} \#_i T.
%\end{align}

\subsection{Marked trees}

Let $T$ denote a locally finite  $\mathfrak{G}$-type tree and let $v \in T$ be one of its vertices.  We say the tree $T$ is \emph{marked} at $v$. The pair $(T,v)$ is called a \emph{marked tree}.  The directed path from $v$ to the root of $T$ is called the \emph{spine}. The tree $f(T,v)$ formed by $v$ and all its descendants is called the \emph{fringe subtree} of $T$ at $v$. For any $k \ge 0$  we let $f^{[k]}(T,v)$ denote the fringe subtree of the $k$th ancestor of $v$, marked at the vertex corresponding to $v$. (In the case $k=0$, the $k$th ancestor of $v$ is the vertex $v$ itself, and the marked  vertex of $f^{[0]}(T,v)$ coincides with its root vertex.) If the $k$th ancestor of $v$ does not exist, then we set  $f^{[k]}(T,v)$ to some place-holder value. We call $f^{[k]}(T,v)$ an \emph{extended fringe subtree}. See Figure~\ref{fi:efringe} for an illustration.

\begin{figure}[t]
	\centering
	\begin{minipage}{0.8\textwidth}
		\centering
		\includegraphics[width=1.0\textwidth]{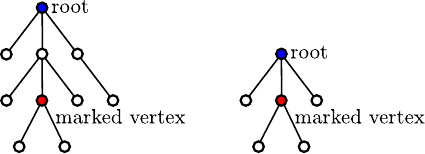}
		\caption{The left-hand side illustrates a finite marked plane tree $(T,v)$, and the right-hand side its extended fringe subtree $f^{[1]}(T,v)$.}
		\label{fi:efringe}
	\end{minipage}
\end{figure}

We may also consider locally finite $\mathfrak{G}$-type plane trees with a marked vertex having a countably infinite number of ancestors. Such a plane tree has hence an infinite backwards growing spine and no root. See~\cite{MR1102319,rerootedstufler2019}  for related notions in the monotype setting. The notion of extended fringe subtrees extends naturally to this setting.

Let $\mfX_\mathrm{f}$ denote the collection of all finite marked $\mathfrak{G}$-type trees. Let $\mfX$ denote the union of $\mfX_\mathrm{f}$ and the collection of  all marked plane trees with an infinite spine such that all extended fringe subtrees are finite. We may endow $\mfX$ with a metric
\begin{align}
	d_{\mfX}(T^\bullet_1, T_2^\bullet) = 
	\begin{cases}
		2,  &f(T_1^\bullet) \neq f(T_2^\bullet) \\
		2^{- \sup\{h \ge 0 \mid f^{[h]}(T_1^\bullet) = f^{[h]}(T_2^\bullet)\}}, &f(T_1^\bullet) = f(T_2^\bullet) 
	\end{cases}, \quad T^\bullet_1, T_2^\bullet \in \mfX.
\end{align}

\begin{proposition}
	The space $(\mfX, d_{\mfX})$ is Polish and $\mfX_\mathrm{f} \subset \mfX$ is a countable dense subset.
\end{proposition}
\begin{proof}
	The space is complete:
	If $(T_n^\bullet)_{n \ge 1}$ is a Cauchy sequence in  $(\mfX, d_{\mfX})$, then for each integer $h \ge 1$ there is an integer $N_h \ge 1$ such that for all $n \ge N_h$ we have $f^{[h]}(T_n^\bullet) = f^{[h]}(T_{N_h}^\bullet)$. We may define a tree $T^\bullet \in \mfX$ such that $f^{[h]}(T^\bullet) = f^{[h]}(T_{N_h}^\bullet)$ for all $h \ge 1$. This tree satisfies $\lim_{n \to \infty} d_{\mfX}(T^\bullet, T_n^\bullet) =0$.
	
	The space is separable and $\mfX_\mathrm{f}$ is a countable dense subset: There are countably many finite plane trees. As $\mathfrak{G}$ is countable, each finite tree has a countable number of type configurations. Hence there are countably many finite $\mathfrak{G}$-type trees. Each has a finite number of locations for a marked vertex. Hence $\mfX_\mathrm{f}$ is countable. Moreover, for any integer $h \ge 0$ and any $T^\bullet \in \mfX \setminus \mfX_{\mathrm{f}}$ it holds that $f^{[h]}(T^\bullet) \in \mfX_\mathrm{f}$ and $d_{\mfX}(T^\bullet, f^{[h]}(T^\bullet)) = 2^{-h}$. We may force this distance to be arbitrarily small by taking $h$ sufficiently large.
\end{proof}

\subsection{Annealed and quenched convergence of random trees}

Suppose that $(\bm{T}_n)_{n \ge 1}$ is a sequence of random finite $\mathfrak{G}$-type plane trees. Let $\mathfrak{G}_0 \subset \mathfrak{G}$  be a non-empty subset so that 
\begin{align}
	\Prb{\sum_{i \in \mathfrak{G}_0} \#_i \bm{T}_n >0} \to 1
\end{align} as $n \to \infty$. We may select a vertex $v_n$ from $\bm{T}_n$ uniformly at random among all vertices of $\bm{T}_n$ whose type lies in $\mathfrak{G}_0$. That is, if vertices of such types are present, otherwise we set $v_n$ to some placeholder value.

\emph{Annealed} convergence of $(\bm{T}_n, v_n)$ refers to distributional convergence in the usual sense, that is
\begin{align}
	(\bm{T}_n, v_n) \convdis \bm{T}^\bullet
\end{align}
for some random variable  $\bm{T}^\bullet$ with values in $\mfX$. This is equivalent to requiring
\begin{align}
	\Pr{f^{[h]}(\bm{T}_n, v_n) = T^\bullet} \to \Pr{f^{[h]}(\bm{T}^\bullet) = T^\bullet}
\end{align}
for any integer $h \ge 0$ and any finite marked tree $T^\bullet \in \mfX_\mathrm{f}$. 

Let $\mathbb{M}_1(\mfX)$ denote the Polish space of Borel probability measures on $\mfX$. 
Given a finite $\mathfrak{G}$-type tree $T$ having at least one vertex  with type in $\mathfrak{G}_0$, we may  consider the probability measure $P_T \in \mathbb{M}_1(\mfX)$ that corresponds to the uniform measure on all pairs $(T,v)$, with $v$ a vertex of $T$ having type in $\mathfrak{G}_0$. Thus,
\begin{align}
	P_{\bm{T}_n} \eqdist \mfL( (\bm{T}_n, v_n) \mid \bm{T}_n).
\end{align}
We say $(\bm{T}_n,v_n)$ converges in the \emph{quenched} sense to $\bm{T}^\bullet$, if
\begin{align}
	P_{\bm{T}_n} \convp \mathfrak{L}(\bm{T}^\bullet).
\end{align}
This is equivalent to requiring that for any integer $h \ge 0$ and any finite marked tree $T^\bullet \in \mfX_\mathrm{f}$, with the marked vertex having type in $\mathfrak{G}_0$, it holds that 
\begin{align}
	\frac{ |\{ v \in \bm{T}_n \mid f^{[h]}(\bm{T}_n, v) = T^\bullet\}|}{ \sum_{i \in \mathfrak{G}_0} \#_i \bm{T}_n } \convp  \Pr{f^{[h]}(\bm{T}^\bullet) = T^\bullet}.
\end{align}

%\subsection{Multi-type Galton--Watson trees}

\section{Rerooted multi-type Galton--Watson trees}
\label{sec:rerooted}

Let $\mathfrak{G}$ denote a countable non-empty set of types.
Let $\bm{\sigma} = (\bm{\sigma}_i)_{i \in \mathfrak{G}}$ denote an independent family with $\bm{\sigma}_i$ a random word from $\langle \mathfrak{G} \rangle$ for each type $i \in \mathfrak{G}$. We let $\bm{\xi}_i$ denote the random vector corresponding to $\bm{\sigma}_i$. That is, $\bm{\xi}_i$ takes values in $\ndN_0^{( \mathfrak{G} )}$, and for each $j \in \mathfrak{G}$ the $j$-th coordinate of  $\bm{\xi}_i$ is equal to the number of occurrences of the letter $j$ in the random word $\bm{\sigma}_i$. Let $\bm{\xi} = (\bm{\xi}_i)_{i \in \mathfrak{G}}$. By abuse of notation, we call $\bm{\sigma}$ an \emph{ordered $\mathfrak{G}$-type offspring distribution}, and $\bm{\xi}$ the associated \emph{(unordered) $\mathfrak{G}$-type offspring distribution}.\label{de:offspring} 

A \emph{$\mathfrak{G}$-type branching process} with offspring distribution $\bm{\sigma}$ starts with a single individual whose type is determined by an independent random variable $\alpha$ with values in $\mathfrak{G}$. Any individual of type $i \in \mathfrak{G}$ receives offspring according to an independent copy of $\bm{\sigma}_i$, with the $j$-th coordinate of the copy corresponding to the number of children with type $j$. 

\label{de:bmt} We let $\bm{T}$ denote the $\mathfrak{G}$-type plane tree that encodes the genealogical structure of the $\mathfrak{G}$-type branching process, and say $\bm{T}$ is a \emph{$\bm{\sigma}$-Galton--Watson tree}.

We will always  make the assumption that
\begin{align}
	\label{eq:relevance}
	\Pr{\#_i \bm{T} \ne 0} >0 \qquad \text{for all $i \in \mathfrak{G}$}.
\end{align}
We may do so without loss of generality, since we may  shrink the set $\mathfrak{G}$ to exclude irrelevant types. Furthermore, we only consider offspring distributions  for which
\begin{align}
	\label{eq:fin}
	\text{$\bm{T}$ is almost surely finite}.
\end{align}
See for example~\cite[Thm. 2 on page 186]{MR0373040} for conditions ensuring this.

\subsection{Fringe subtrees}

Let us fix a type $\kappa \in \mathfrak{G}$. We let \label{de:ta}$\bm{T}^{\alpha}$ denote a random multi-type tree defined similarly to~$\bm{T}$ (with the root type determined according to an $\alpha$-distributed choice), only that non-root vertices of type $\kappa$ receive no offspring.  This way, $\bm{T}$ may be generated by starting with $\bm{T}^{\alpha}$ and inserting at each non-root vertex of type $\kappa$ an independent copy of $\bm{T}$ conditioned on having root type $\kappa$.  We let \label{de:tk}$\bm{T}^{\kappa}$  be defined analogously to $\bm{T}^\alpha$, but we start with a root having a fixed type~$\kappa$. Moreover, we let $\bm{T}^\kappa_1, \bm{T}^\kappa_2, \ldots$ denote independent copies of $\bm{T}^{\kappa}$. Assumption~\eqref{eq:fin} entails that
\begin{align}
	\Pr{\#_\kappa \bm{T} < \infty} = 1.
\end{align}
It follows by the standard depth-first-search exploration that the multi-type Galton--Watson tree $\bm{T}$ may be constructed from a  sequence
\begin{align}
	\label{eq:ttasdf}
	(\bm{T}^\alpha, \bm{T}^\kappa_1, \ldots, \bm{T}^\kappa_L)
\end{align}
with $L\ge 0$ the smallest non-negative integer for which
\begin{align}
	\label{eq:red2mono}
	\#_\kappa \bm{T}^\alpha - \one_{\alpha=\kappa} + \sum_{i=1}^L \left( \#_\kappa \bm{T}^\kappa_i -1 \right) = L.
\end{align}
In particular, 
\begin{align}
	\label{eq:oins}
	\#_\kappa \bm{T} = L + \one_{\alpha =\kappa}.
\end{align} 

Let	 $T$ be a finite $\mathfrak{G}$-type tree whose root has type $\kappa$. We may decompose $T$ in the same way, so that it corresponds to a sequence of trees $(T_1, \ldots, T_k)$. We let $N_T(\cdot)$\label{de:nt} denote a function that takes a finite $\mathfrak{G}$-type tree as input and returns the number of occurrences of $T$ as a fringe subtree.  We let $\psi(\cdot)$ denote a function that takes as input a finite ordered sequence of $\mathfrak{G}$-type trees and returns as output the number of occurrences of $(T_1, \ldots, T_k)$ as a consecutive substring. Generating $\bm{T}$ from the sequence in~\eqref{eq:ttasdf}, we may write
\begin{align}
	\label{eq:nt}
	N_T(\bm{T}) = \psi(\bm{T}^\alpha, \bm{T}^\kappa_1, \ldots, \bm{T}^\kappa_L).
\end{align}
Note that occurrences of $(T_1, \ldots, T_k)$ may not overlap: since $(T_1, \ldots, T_k)$  corresponds to a tree, no proper initial segment $(T_1, \ldots, T_i)$ with $1 \le i < k$ may be equal to a a proper tail segment $(T_{k-i+1}, \ldots, T_k)$.  Hence changing one coordinate of the input of the function $\psi$ changes its value by at most $1$.\footnote{Here is a helpful analogy: Suppose we count how often the string ``and'' occurs in a book as consecutive substring. Such occurrences cannot overlap, because no proper initial segment of the string is equal to a tail segment. Hence if we change one letter of the book, the total number of occurrences may increase by $1$, decrease by $1$, or stay the same.} %Conditionally on $L$, the coordinates of $(\bm{T}^\alpha, \bm{T}^\kappa_1, \ldots, \bm{T}^\kappa_L)$ are independent, hence 
It follows by McDiarmid's inequality  that for any integer $\ell \ge 1$
\begin{align}
	\label{eq:AA}
	\Prb{\left|\psi(\bm{T}^\alpha, \bm{T}^\kappa_1, \ldots, \bm{T}^\kappa_\ell) - \Exb{\psi(\bm{T}^\alpha, \bm{T}^\kappa_1, \ldots, \bm{T}^\kappa_\ell) }  \right| \ge x  }  \le 2 \exp\left( -\frac{2 x^2}{\ell+1}\right). 
\end{align}
Moreover, 
\begin{align}
	\label{eq:BB}
	\Exb{\psi(\bm{T}^\alpha, \bm{T}^\kappa_1, \ldots, \bm{T}^\kappa_\ell) }  &= O(1) + \ell \prod_{s=1}^k \Pr{\bm{T}^\kappa= T_s},
\end{align}
with the $O(1)$ term having a deterministic absolute bound that depends only on~$T$. Letting $\bm{T}(\kappa)$\label{de:tkappa} denote a $\bm{\sigma}$-Galton--Watson tree started at a vertex with type $\kappa$, it holds that 
\begin{align}
	\label{eq:zwoa}
	\prod_{s=1}^k \Pr{\bm{T}^\kappa= T_s} = \Pr{\bm{T}(\kappa) = T}.
\end{align}

\begin{lemma}
	\label{le:multfringe}
	We consider the conditioned multi-type Galton--Watson tree \label{de:tn} \begin{align}
		\label{eq:deftn}
		\bm{T}_n := (\bm{T} \mid \cE_n),
	\end{align} for some family of events $\cE_n$ satisfying $\Pr{\cE_n}>0$ for all $n$.  Suppose that there is a deterministic sequence $s_n\to \infty$ satisfying for all $\epsilon>0$ 
	\begin{align}
		\label{eq:boundassumption}
		\exp(-\epsilon s_n) = o( \Pr{\cE_n}) \qquad \text{and} \qquad \Pr{\#_\kappa \bm{T}_n \ge s_n } \to 1.
	\end{align}
	Then  for any finite $\mathfrak{G}$-type tree $T$ with root type $\kappa$
	\begin{align}
		\label{eq:as1}
		\frac{N_T(\bm{T}_n)}{\#_\kappa \bm{T}_n} \convp \Pr{\bm{T}(\kappa) = T}.
	\end{align}
	Let $f(\bm{T}_n, v_n^\kappa)$ denote the fringe subtree in $\bm{T}_n$ encountered at a uniformly selected type $\kappa$ vertex~$v_n^\kappa$ of $\bm{T}_n$. Then
	\begin{align}
		\label{eq:as2}
		\mfL(f(\bm{T}_n, v_n^\kappa) \mid \bm{T}_n) \convp \mfL(\bm{T}(\kappa)).
	\end{align}
\end{lemma}

Here we interpret the conditional law $\mfL(f(\bm{T}_n, v_n^\kappa) \mid \bm{T}_n)$ as a random Borel probability measure on the countable space of finite $\mathfrak{G}$-type plane trees equipped with the discrete topology.

\begin{proof}[Proof of Lemma~\ref{le:multfringe}]
	Using Equations~\eqref{eq:oins} and \eqref{eq:nt}, we obtain for any $\epsilon>0$
	\begin{align*}
		&\Prb{ \left| \frac{N_T(\bm{T}_n)}{\#_\kappa \bm{T}_n} - \Pr{\bm{T}(\kappa) = T} \right| \ge \epsilon} \\ 
		&\le  \Pr{L < s_n -1 \mid \cE_n}  \\&\quad+ \Pr{\cE_n}^{-1}\sum_{\ell \ge s_n-1} 	\Prb{ \left| \frac{N_T(\bm{T})}{\#_\kappa \bm{T}} - \Pr{\bm{T}(\kappa) = T} \right| \ge \epsilon, L = \ell} \\
		&\le \Pr{\#_\kappa \bm{T}_n < s_n }  \\&\quad + \Pr{\cE_n}^{-1}\sum_{\ell \ge s_n-1} 	\Prb{ \left| \frac{\psi(\bm{T}^\alpha, \bm{T}^\kappa_1, \ldots, \bm{T}^\kappa_\ell)}{\ell + \one_{\alpha = \kappa}} - \Pr{\bm{T}(\kappa) = T} \right| \ge \epsilon}.
	\end{align*}
	Combining Equations~\eqref{eq:AA}, \eqref{eq:BB}, and \eqref{eq:zwoa}, it follows that there is a constant $C>0$ that only depends on $T$, such that for $\ell \ge s_n - 1$ and $n$ large enough (so that $\epsilon \ell >C$) 
	\begin{align*}
		&\Prb{ \left| \frac{\psi(\bm{T}^\alpha, \bm{T}^\kappa_1, \ldots, \bm{T}^\kappa_\ell)}{\ell + \one_{\alpha = \kappa}} - \Pr{\bm{T}(\kappa) = T} \right| \ge \epsilon} \\
		&\le \Prb{ \left| \psi(\bm{T}^\alpha, \bm{T}^\kappa_1, \ldots, \bm{T}^\kappa_\ell) - \Exb{\psi(\bm{T}^\alpha, \bm{T}^\kappa_1, \ldots, \bm{T}^\kappa_\ell) } \right| \ge \epsilon\ell - C} \\
		&\le 2 \exp\left( -\frac{2 (\epsilon\ell - C)^2}{\ell+1}\right).
	\end{align*}
	
	Summing up, we obtain
	\begin{multline}
		\label{eq:lllbound}
		\Prb{ \left| \frac{N_T(\bm{T}_n)}{\#_\kappa \bm{T}_n} - \Pr{\bm{T}(\kappa) = T} \right| \ge \epsilon} \\ \le \Pr{\#_\kappa \bm{T}_n < s_n }  + \Pr{\cE_n}^{-1} O(\exp(-2 \epsilon^2 s_n)).
	\end{multline}
	Our assumptions in~\eqref{eq:boundassumption} ensure that the upper bound in Inequality~\eqref{eq:lllbound} tends to zero as $n$ becomes large. Hence this verifies~\eqref{eq:as1}.  
	
	We have verified~\eqref{eq:as1} for arbitrary finite $T$, and Assumptions~\eqref{eq:relevance} and \eqref{eq:fin} ensure that $\bm{T}(\kappa)$ is almost surely finite. Hence the convergence of random probability measures in $\eqref{eq:as2}$ follows.
\end{proof}

Of course, periodicities may come into play, and it is sensible to also consider  events $\cE_n$ for which $\Pr{\cE_n}>0$ only when $n$ is part of some infinite subset of $\ndN_0$. Nothing changes in our arguments as long as we restrict $n$ to that subset when taking limits. 

Lemma~\ref{le:multfringe} is rather general. Under stronger assumptions, its proof yields stronger results:

\begin{remark}
	We may apply Lemma~\ref{le:multfringe} if
	\begin{align}
		\Pr{\cE_n} = \exp(o(n)),
	\end{align} and if there is  a concentration constant $c(\kappa)>0$ such that $\Pr{\#_\kappa \bm{T}_n  \notin (1 \pm \epsilon)c(\kappa)n}$ tends to zero exponentially fast for any fixed $\epsilon>0$. Using~\eqref{eq:lllbound} it follows that    
	\begin{align}
		\Prb{ \left| \frac{N_T(\bm{T}_n)}{c(\kappa) n} - \Pr{\bm{T}(\kappa) = T}  \right| \ge \epsilon} = O(\gamma(\epsilon)^n)
	\end{align}  for some constant $0< \gamma(\epsilon) <1$. Hence by the Borel--Cantelli criterium 
	\begin{align}
		\label{eq:almostsure1}
		\frac{N_T(\bm{T}_n)}{c(\kappa) n} \convas \Pr{\bm{T}(\kappa) = T},
	\end{align}
	yielding
	\begin{align}
		\label{eq:almostsure2}
		\mfL(f(\bm{T}_n, v_n^\kappa) \mid \bm{T}_n) \convas \mfL(\bm{T}(\kappa)).
	\end{align}
\end{remark}

\subsection{Multi-type sin-trees with a fixed root type}

We are going to define a random infinite but locally finite multi-type tree $\hat{\bm{T}}(\kappa)$ having an infinite spine that growth backwards.

Note that $\#_\kappa \bm{T}(\kappa)$ is distributed like the population of a monotype Galton--Watson tree with branching mechanism $\#_\kappa \bm{T}^\kappa-1$. Assumptions~\eqref{eq:relevance} and ~\eqref{eq:fin} entail that
\begin{align}
	\text{$\bm{T}(\kappa)$ is almost surely finite}.
\end{align}
This means 
\begin{align}
	\Pr{\#_\kappa \bm{T}^\kappa = 1} > 0, \qquad \Pr{\#_\kappa \bm{T}^\kappa = 2} < 1, \qquad \text{and} \qquad \Ex{\#_\kappa \bm{T}^\kappa } \le 2.
\end{align}

The construction requires us to make the assumption that the $\kappa$-branching mechanism is actually critical. That is, we assume in the following that
\begin{align}
	\label{eq:critassumption}
	\Ex{\#_\kappa \bm{T}^\kappa} = 2.
\end{align}
This allows us to define the $\kappa$-biased version $\hat{\bm{T}}^\kappa$\label{de:thk} of $\bm{T}^\kappa$ with distribution given by
\begin{align}
	\label{eq:sbstep}
	\Pr{\hat{\bm{T}}^\kappa = (T^\kappa, u)} = \Pr{\bm{T}^\kappa = T^\kappa}
\end{align}
for any pair $(T^\kappa, u)$ of a  $\mathfrak{G}$-type tree $T^\kappa$ (with the root having type $\kappa$ and all non-root vertices of type $\kappa$ having no offspring) and a vertex $u$ of $T^\kappa$ that is a non-root vertex of type $\kappa$. Note that Assumption~\eqref{eq:critassumption} is really required in order for this to be a probability distribution.

We are now ready to construct the random infinite $\mathfrak{G}$-type tree $\hat{\bm{T}}(\kappa)$\label{de:htk}.
We start with a vertex $u_0$ of type $\kappa$ that is declared the start of the spine and becomes the root of an independent copy of $\bm{T}(\kappa)$. The ancestor $u_1$ of $u_0$ becomes the root of an independent copy of $\hat{\bm{T}}^\kappa$. We then glue the marked vertex to $u_0$, and all non-marked leaves of type $\kappa$ become roots of independent copies of $\bm{T}(\kappa)$. We proceed in this way with an ancestor $u_2$ of $u_1$ and so on, yielding an infinite backwards growing spine $u_0, u_1, \ldots$. That is, the tree $\hat{\bm{T}}(\kappa)$ obtained in this way has a marked vertex $u_0$ with a countably infinite number of ancestors. This  constitutes the multi-type analogue of Aldous' invariant sin-tree constructed in~\cite{MR1102319} for critical monotype Galton--Watson trees. Here the abbreviation \emph{sin} stands for \emph{single infinite path}.

Given an integer $h \ge 0$, we let $f^{\kappa, [h]}(\cdot, \cdot)$ denote a  function that takes as input a $\mathfrak{G}$-type  tree  $T_1$ together with one of its vertices $v_1$, and returns the fringe subtree of $T_1$ at the $h$-th ancestor of type $\kappa$ of $v_1$ together with the location $v_1$ within it. That is, it produces a marked tree where the root has type $\kappa$ and where the path from the root to the marked vertex contains precisely $h+1$ vertices of type $\kappa$. If no such ancestor exists, the function returns $(T_1, v_1)$ together with the information that an overflow occurred. It is immediate that
\begin{align}
	f^{\kappa, [0]}(\hat{\bm{T}}(\kappa)) \eqdist \bm{T}(\kappa)
\end{align}
and, in general, $f^{\kappa, [h]}(\hat{\bm{T}}(\kappa))$ follows the distribution of $\bm{T}(\kappa)$ biased on the number vertices with type $\kappa$ whose joining path with the root contains precisely $h+1$ vertices that also have type $\kappa$. That is, if $T$ is a $\mathfrak{G}$-type tree whose root has type $\kappa$ and if $u$ is  a vertex of $T$ of type $\kappa$ such that the joining path with the root contains precisely $h+1$ vertices of type $\kappa$, then
\begin{align}
	\label{eq:sbtr}
	\Pr{f^{\kappa, [h]}(\hat{\bm{T}}(\kappa))= (T,u)} = \Pr{\bm{T}(\kappa) = T}.
\end{align}

%We consider the collection $\mathfrak{T}^{\kappa, \bullet}$ of all  $\mathfrak{G}$-type trees $T^\bullet$ having a marked vertex so that $f^{\kappa, [h]}(T^\bullet)$ is finite for all $h \ge 0$. We may endow $\mathfrak{T}^{\kappa, \bullet}$ with a topology such that convergence (deterministic or  in distribution) of a (deterministic or random) sequence $(X_n)_{n \ge 1}$ in $\mathfrak{T}^{\kappa, \bullet}$ is equivalent to convergence of the projections $f^{\kappa, [h]}(X_n)$, $h \ge 0$. This is analogous to the monotype case studied in~\cite{rerootedstufler2019}.

Let $T$ denote a finite $\mathfrak{G}$-type tree and let $v$ be a vertex of $T$, such that there are $h+1$ vertices of type $\kappa$ on the path from $v$ to the root of $T$. Given a $\mathfrak{G}$-type tree $T_1$, we let  $N_{(T,v)}(T_1) \in \ndN_0 \cup \{\infty\}$ denote the number of vertices in $T_1$ with $f^{\kappa, [h]}(T_1,u) = (T,v)$ (without any overflow). It is obvious that the functions $N_{(T,v)}(\cdot)$ and $N_T(\cdot)$ always return the same number, hence
\begin{align}
	\label{eq:Neq}
	N_{(T,v)}(\cdot) = N_T(\cdot).
\end{align}
Together with Lemma~\ref{le:multfringe} and Equation~\eqref{eq:sbtr} we obtain:

\begin{theorem}
	\label{te:convfringe}
	Let $\bm{T}_n$ be as in Equation~\eqref{eq:deftn}.
	Suppose that $\Ex{\#_\kappa \bm{T}^\kappa} = 2$ and that Assumption~\eqref{eq:boundassumption} holds. Let $v_n^\kappa$  denote a uniformly selected  type $\kappa$ vertex of $\bm{T}_n$.  Then
	\begin{align}
		\mfL( (\bm{T}_n, v_n^\kappa) \mid \bm{T}_n) \convp \mathfrak{L}(\hat{\bm{T}}(\kappa)).
	\end{align} 
	That is, given  a finite $\mathfrak{G}$-type tree $T$ with root type $\kappa$ and a type $\kappa$ vertex $v$ of $T$, 
	\begin{align}
		\label{eq:as221}
		\frac{N_{(T,v)}(\bm{T}_n)}{\#_\kappa \bm{T}_n} \convp \Pr{f^{\kappa, [h]}(\hat{\bm{T}}(\kappa)) = (T,v)},
	\end{align}
	with $h+1$ denoting the number of vertices of type $\kappa$ on the path from the root to $v$ in $T$.
\end{theorem}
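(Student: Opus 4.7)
The plan is to reduce the random measure convergence to the pointwise statement \eqref{eq:as221}, and then to derive \eqref{eq:as221} directly from Lemma~\ref{le:multfringe} together with the identities \eqref{eq:Neq} and \eqref{eq:sbtr}. Because the topology on $\mathfrak{T}^{\kappa,\bullet}$ is defined so that convergence of a random element is equivalent to convergence of all projections $f^{\kappa,[h]}(\cdot)$, $h \ge 0$, convergence in probability of conditional laws on $\mathfrak{T}^{\kappa,\bullet}$ is in turn equivalent to convergence in probability of the pushforward conditional laws on each projection space. Since each projection space is discrete, this finally reduces to convergence in probability of the atomic weights, which is precisely \eqref{eq:as221}.

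The key computation is to observe that, conditionally on $\bm{T}_n$, the marked tree $(\bm{T}_n, v_n^\kappa)$ is obtained by picking $v_n^\kappa$ uniformly among the type $\kappa$ vertices of $\bm{T}_n$, so for any finite marked tree $(T,v)$ with root of type $\kappa$ and with exactly $h+1$ type $\kappa$ vertices on the path from the root to $v$,
\begin{align*}
\Prb{f^{\kappa,[h]}(\bm{T}_n, v_n^\kappa) = (T,v) \mid \bm{T}_n} = \frac{N_{(T,v)}(\bm{T}_n)}{\#_\kappa \bm{T}_n}.
\end{align*}
Using \eqref{eq:Neq}, the right-hand side equals $N_T(\bm{T}_n)/\#_\kappa \bm{T}_n$. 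Since the criticality assumption $\Ex{\#_\kappa \bm{T}^\kappa} = 2$ ensures that $\bm{T}(\kappa)$ is almost surely finite (as noted just before the sin-tree construction), and since Assumption~\eqref{eq:boundassumption} is in force, Lemma~\ref{le:multfringe} applies and yields $N_T(\bm{T}_n)/\#_\kappa \bm{T}_n \convp \Pr{\bm{T}(\kappa) = T}$. Finally \eqref{eq:sbtr} identifies this with $\Pr{f^{\kappa,[h]}(\hat{\bm{T}}(\kappa)) = (T,v)}$, which proves \eqref{eq:as221}.

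The remaining point is the conservation of total mass: the empirical distribution on the projection space has mass $1$, while a priori the pointwise limits only give a lower bound of $1$ in the limit via Fatou. The step to handle is that the atoms outside the image of $f^{\kappa,[h]}(\hat{\bm{T}}(\kappa))$ correspond to ``overflow'' vertices, i.e., type $\kappa$ vertices of $\bm{T}_n$ with fewer than $h$ strict type $\kappa$ ancestors. Since the pointwise limits already exhaust total mass $1$, Fatou and the trivial upper bound $\le 1$ force the mass of the overflow atom to vanish in probability. This upgrades the pointwise atomic convergence to weak (equivalently, total variation) convergence in probability of random probability measures on the projection space. Assembling this for every $h \ge 0$ yields the convergence of conditional laws on $\mathfrak{T}^{\kappa,\bullet}$ claimed in the theorem.

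I expect the only mildly delicate step to be this mass-conservation argument for the projection at each level $h$; once it is stated cleanly, the rest is a direct combination of results already available.
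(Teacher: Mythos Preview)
Your proposal is correct and follows essentially the same route as the paper, which simply records the theorem as an immediate consequence of Lemma~\ref{le:multfringe} combined with \eqref{eq:Neq} and \eqref{eq:sbtr}. Your write-up adds detail the paper leaves implicit: the reduction to projections via the topology on $\mathfrak{T}^{\kappa,\bullet}$, and the mass-conservation step ruling out overflow mass at each level $h$ (the paper handles the analogous point for $h=0$ inside the proof of Lemma~\ref{le:multfringe} by noting that $\bm{T}(\kappa)$ is almost surely finite, and tacitly regards the extension to general $h$ as clear).
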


\subsection{Non-recurring types along the spine}

Theorem~\ref{te:convfringe} is a local convergence result for the vicinity of a uniformly selected vertex of type~$\kappa$ satisfying the criticality constraint  \eqref{eq:critassumption}. The limit tree $\hat{\bm{T}}(\kappa)$ has the property, that its spine has an infinite number of vertices of type $\kappa$. We are going to prove a criterion that also encompasses types that occur only a stochastically bounded number of times along the spine of the limit.

Suppose that we are given a type $\gamma \in \mathfrak{G} \setminus \{\kappa\}$ satisfying
\begin{align}
	\label{eq:biasgamma}
	0 < \Ex{\#_\gamma \bm{T}^\kappa} < \infty.
\end{align}
This allows us to define the size-biased version $\hat{\bm{T}}^{\kappa,\gamma}$\label{de:tkg} given by
\begin{align}
	\label{eq:biaskappagamma}
	\Pr{\hat{\bm{T}}^{\kappa,\gamma} = (T,u)} = \frac{\Pr{\bm{T}^\kappa = T}}{ \Ex{\#_\gamma \bm{T}^\kappa}}
\end{align}
for any finite $\mathfrak{G}$-type tree $T$ (with root type $\kappa$, and all non-root vertices of type $\kappa$ having no offspring) and any vertex $u$ of $T$ of type $\gamma$. We define $\hat{\bm{T}}(\kappa, \gamma)$\label{de:tkg2} like $\hat{\bm{T}}(\kappa)$, only with a single local modification: instead of letting $u_0$ become the root of an independent copy of $\bm{T}(\kappa)$, we let it become the root of an independent copy of $\hat{\bm{T}}^{\kappa,\gamma}$ (with the marked vertex becoming the marked vertex of $\hat{\bm{T}}(\kappa, \gamma)$), and then make each of the type $\kappa$ leaves of this structure the root of an independent copy of $\bm{T}(\kappa)$. Note that $\hat{\bm{T}}(\kappa, \gamma)$ may or may not have an infinite number of vertices of type $\gamma$ on the spine, depending on whether $\bm{T}^{\kappa}$ has with positive probability a vertex of type $\gamma$ on the path from the root of $\bm{T}^\kappa$ to some leaf of $\bm{T}^{\kappa}$ with type $\kappa$.

The representation~\eqref{eq:ttasdf} entails that, since $\gamma \ne \kappa$, 
\begin{align}
	\label{eq:typegamma}
	\#_\gamma \bm{T} = \#_\gamma \bm{T}^\alpha + \sum_{i=1}^{\#_\kappa \bm{T}-\one_{\alpha = \kappa}}  \#_\gamma \bm{T}^\kappa_i.
\end{align}
Hence if the events $\cE_n$ are reasonably well behaved,  Assumption~\eqref{eq:biasgamma}  and Equation~\eqref{eq:typegamma} will ensure that $\#_\gamma \bm{T}$ concentrates around $\Ex{\#_\gamma \bm{T}^\kappa} \#_\kappa \bm{T}$. This motivates the following local convergence result for the vicinity of a typical vertex with type~$\gamma$:
\begin{theorem}
	\label{te:convfringe2}
	Let $\bm{T}_n$ be as in Equation~\eqref{eq:deftn}.
	Suppose that $\Ex{\#_\kappa \bm{T}^\kappa} = 2$ and that Assumption~\eqref{eq:boundassumption} holds. Furthermore, assume that $0 < \Ex{\#_\gamma \bm{T}^\kappa}< \infty$. Then for any finite $\mathfrak{G}$-type tree $T$ with root type $\kappa$ and a type $\gamma$ vertex $v$ of $T$  it holds that
	\begin{align}
		\label{eq:as221444}
		\frac{N_{(T,v)}(\bm{T}_n)}{\Ex{\#_\gamma \bm{T}^\kappa} \#_\kappa \bm{T}_n} \convp \Pr{f^{\kappa, [h]}(\hat{\bm{T}}(\kappa, \gamma)) = (T,v)}
	\end{align}
	with $h+1$ denoting the number of vertices of type $\kappa$ on the path from the root to $v$ in $T$.
	Let $v_n^\gamma$  denote a uniformly selected  type $\gamma$ vertex of $\bm{T}_n$.  If
	\begin{align}
		\label{eq:dd}
		\frac{\#_\gamma \bm{T}_n}{\#_\kappa \bm{T}_n} \convp \Ex{\#_\gamma \bm{T}^\kappa},
	\end{align}
	then
	\begin{align}
		\label{eq:final}
		\mfL( (\bm{T}_n, v_n^\gamma) \mid \bm{T}_n) \convp \mathfrak{L}(\hat{\bm{T}}(\kappa, \gamma)).
	\end{align} 	
\end{theorem}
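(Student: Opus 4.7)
The plan is to derive both conclusions from Lemma~\ref{le:multfringe} applied to the underlying tree $T$, combined with a direct computation of the distribution of $f^{\kappa,[h]}(\hat{\bm{T}}(\kappa,\gamma))$ in terms of the ingredients~\eqref{eq:sbstep} and~\eqref{eq:biaskappagamma}.

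First I would prove~\eqref{eq:as221444}. By Equation~\eqref{eq:Neq} we have $N_{(T,v)}(\bm{T}_n) = N_T(\bm{T}_n)$, so Lemma~\ref{le:multfringe} immediately yields
\begin{align*}
	\frac{N_{(T,v)}(\bm{T}_n)}{\#_\kappa \bm{T}_n} \convp \Pr{\bm{T}(\kappa) = T}.
\end{align*}
It therefore suffices to establish the identity
\begin{align*}
	\Pr{f^{\kappa,[h]}(\hat{\bm{T}}(\kappa,\gamma)) = (T,v)} = \frac{\Pr{\bm{T}(\kappa)=T}}{\Ex{\#_\gamma \bm{T}^\kappa}},
\end{align*}
after which dividing by $\Ex{\#_\gamma \bm{T}^\kappa}$ gives the conclusion. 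To verify the identity I would decompose $T$ along the sequence of type $\kappa$ vertices on the path from the root down to the closest type $\kappa$ ancestor of $v$. This produces a chain of $\bm{T}^\kappa$-shaped blocks along the $\kappa$-spine (ending in the block that contains $v$), together with independent $\bm{T}(\kappa)$-subtrees attached to all remaining non-root type $\kappa$ leaves. In the construction of $\hat{\bm{T}}(\kappa,\gamma)$, the block containing $v$ is distributed as $\hat{\bm{T}}^{\kappa,\gamma}$ and by~\eqref{eq:biaskappagamma} contributes a factor $\Pr{\bm{T}^\kappa=\cdot}/\Ex{\#_\gamma \bm{T}^\kappa}$, each remaining spine block is distributed as $\hat{\bm{T}}^\kappa$ and by~\eqref{eq:sbstep} contributes a factor $\Pr{\bm{T}^\kappa=\cdot}$, and the attached subtrees contribute factors $\Pr{\bm{T}(\kappa)=\cdot}$. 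Multiplying these factors and recognising the resulting product as the standard decomposition of $\Pr{\bm{T}(\kappa)=T}$ divided by $\Ex{\#_\gamma \bm{T}^\kappa}$ gives the claim.

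Next I would prove~\eqref{eq:final}. By the topology on $\mathfrak{T}^{\kappa,\bullet}$ recalled before Theorem~\ref{te:convfringe}, convergence in probability of the random probability measure $\Pr{(\bm{T}_n, v_n^\gamma) \mid \bm{T}_n}$ to $\mathfrak{L}(\hat{\bm{T}}(\kappa,\gamma))$ is equivalent to the convergence in probability
\begin{align*}
	\Prb{f^{\kappa,[h]}((\bm{T}_n, v_n^\gamma))=(T,v) \mid \bm{T}_n} \convp \Pr{f^{\kappa,[h]}(\hat{\bm{T}}(\kappa,\gamma))=(T,v)}
\end{align*}
for each finite marked $(T,v)$ as in the statement and each $h\ge 0$. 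Since $v_n^\gamma$ is uniform among the type $\gamma$ vertices of $\bm{T}_n$, the left-hand side equals $N_{(T,v)}(\bm{T}_n)/\#_\gamma \bm{T}_n$, which I would factor as
\begin{align*}
	\frac{N_{(T,v)}(\bm{T}_n)}{\#_\gamma \bm{T}_n} = \frac{N_{(T,v)}(\bm{T}_n)}{\Ex{\#_\gamma \bm{T}^\kappa}\,\#_\kappa \bm{T}_n} \cdot \frac{\Ex{\#_\gamma \bm{T}^\kappa}\,\#_\kappa \bm{T}_n}{\#_\gamma \bm{T}_n}.
\end{align*}
The first factor converges to $\Pr{f^{\kappa,[h]}(\hat{\bm{T}}(\kappa,\gamma))=(T,v)}$ by the previous step, while the second factor tends to $1$ by the hypothesis~\eqref{eq:dd}, and the conclusion~\eqref{eq:final} follows by Slutsky.

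The main obstacle is the combinatorial identity in the first step, namely that the single biased block~\eqref{eq:biaskappagamma} on the spine together with the $\kappa$-biased spine pieces~\eqref{eq:sbstep} recombine to give exactly the unbiased law $\Pr{\bm{T}(\kappa)=T}$ divided by the single factor $\Ex{\#_\gamma \bm{T}^\kappa}$. The criticality hypothesis~\eqref{eq:critassumption} enters crucially here, since without it the distribution~\eqref{eq:sbstep} is not a probability measure and the spine decomposition of $\hat{\bm{T}}(\kappa,\gamma)$ is not well defined. Once this identity is in place, the remainder is a routine combination of Lemma~\ref{le:multfringe} with the population ratio hypothesis~\eqref{eq:dd}.
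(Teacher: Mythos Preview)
Your proposal is correct and follows essentially the same route as the paper: the paper's proof simply states that \eqref{eq:as221444} ``readily follows from \eqref{eq:as1}, \eqref{eq:Neq}, and the definition of $\hat{\bm{T}}(\kappa,\gamma)$'' and that \eqref{eq:final} then follows from \eqref{eq:as221444} and \eqref{eq:dd} via Slutsky's theorem. Your argument fills in exactly these steps, spelling out the spine-block decomposition that makes the phrase ``the definition of $\hat{\bm{T}}(\kappa,\gamma)$'' into the explicit identity $\Pr{f^{\kappa,[h]}(\hat{\bm{T}}(\kappa,\gamma))=(T,v)} = \Pr{\bm{T}(\kappa)=T}/\Ex{\#_\gamma \bm{T}^\kappa}$.
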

\begin{proof}
	Equation~\eqref{eq:as221444} readily follows from \eqref{eq:as1}, \eqref{eq:Neq}, and the definition of $\hat{\bm{T}}(\kappa, \gamma)$. Having Equation~\eqref{eq:as221444}  and Assumption~\eqref{eq:dd} at hand, \eqref{eq:final} readily follows by Slutsky's theorem.
\end{proof}

\begin{remark}
	\label{re:fu}
	Note that Theorem~\ref{te:convfringe2} also applies to the case $\gamma = \kappa$ if we replace $\Ex{\#_\gamma \bm{T}^\kappa}$ by~$1$ in the definition of $\hat{\bm{T}}(\kappa, \gamma)$ and in Assumption~\eqref{eq:dd}.
\end{remark}

\begin{proposition}
	\label{pro:strengthened}
	Theorem~\ref{te:convfringe2} still holds if we relax Equation~\eqref{eq:dd} to 
	\begin{align}
		\label{eq:relaxmate}
		\frac{\#_\gamma \bm{T}_n}{\#_\kappa \bm{T}_n} \le  \Ex{\#_\gamma \bm{T}^\kappa} + o_p(1).
	\end{align}
\end{proposition}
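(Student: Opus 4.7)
The plan is to refine the Slutsky-style argument from the proof of Theorem~\ref{te:convfringe2}: the one-sided bound~\eqref{eq:relaxmate} delivers a one-sided upper bound on each atomic probability of the random conditional law, and a matching lower bound then follows from the total-mass identity for the limit law. Crucially, Equation~\eqref{eq:as221444} is derived in the proof of Theorem~\ref{te:convfringe2} from~\eqref{eq:as1}, \eqref{eq:Neq}, and the construction of $\hat{\bm{T}}(\kappa,\gamma)$ without invoking~\eqref{eq:dd}, so it continues to hold under the relaxed hypothesis.

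Setting $A_n^{(T,v)} := N_{(T,v)}(\bm{T}_n)/\#_\gamma\bm{T}_n$ and $a^{(T,v)} := \Pr{f^{\kappa,[h]}(\hat{\bm{T}}(\kappa,\gamma))=(T,v)}$, I would factor
\[
A_n^{(T,v)} = \frac{N_{(T,v)}(\bm{T}_n)}{\Ex{\#_\gamma\bm{T}^\kappa}\,\#_\kappa\bm{T}_n}\cdot\frac{\Ex{\#_\gamma\bm{T}^\kappa}\,\#_\kappa\bm{T}_n}{\#_\gamma\bm{T}_n}
\]
and apply Slutsky's theorem. The first factor converges in probability to $a^{(T,v)}$ by~\eqref{eq:as221444}, while~\eqref{eq:relaxmate} implies that the second factor is at most $1+o_p(1)$. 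This produces the one-sided bound $A_n^{(T,v)} \le a^{(T,v)} + o_p(1)$ for each fixed marked tree $(T,v)$. For the matching lower bound, I exploit that by construction the backwards-growing spine of $\hat{\bm{T}}(\kappa,\gamma)$ contains infinitely many type-$\kappa$ vertices above the marked vertex, so $f^{\kappa,[h]}(\hat{\bm{T}}(\kappa,\gamma))$ is almost surely a finite marked tree and consequently $\sum_{(T,v)} a^{(T,v)} = 1$. Meanwhile $1 - \sum_{(T,v)} A_n^{(T,v)}$ equals the fraction of type-$\gamma$ vertices in $\bm{T}_n$ whose ancestral line contains fewer than $h+1$ type-$\kappa$ vertices; once this \emph{overflow fraction} is shown to vanish in probability, combining the pointwise upper bounds with the convergence of the total mass to~$1$ via a finite-truncation argument yields $A_n^{(T,v)} \ge a^{(T,v)} - o_p(1)$, and Slutsky concludes.

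The main obstacle is controlling the overflow fraction. The offending $\gamma$-vertices all reside in the top portion of $\bm{T}_n$ determined by the first $h$ levels of type-$\kappa$ vertices, namely the initial $\bm{T}^\alpha$-piece together with a stochastically bounded number of subsequent $\bm{T}^\kappa$-pieces of the canonical decomposition~\eqref{eq:ttasdf}; their aggregated $\gamma$-count is therefore $O_p(1)$ under the unconditional law, thanks to the finiteness~\eqref{eq:biasgamma}. The exponential comparison $\exp(-\epsilon s_n) = o(\Pr{\cE_n})$ in~\eqref{eq:boundassumption} transfers this tightness to the conditional measure governed by $\cE_n$, while~\eqref{eq:relaxmate} combined with $\#_\kappa\bm{T}_n \ge s_n \to \infty$ forces $\#_\gamma\bm{T}_n \to \infty$ in probability; dividing the stochastically bounded overflow count by the diverging denominator then produces the required $o_p(1)$ bound.
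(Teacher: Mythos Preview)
The paper argues in the opposite direction: it first extracts the \emph{lower} bound $N_{(T,v)}(\bm T_n)/\#_\gamma\bm T_n\ge a^{(T,v)}-o_p(1)$ from~\eqref{eq:relaxmate} and~\eqref{eq:as221444}, and then shows that a failure of the matching upper bound at any single $(T_0,v_0)$ would force $\sum_{(T,v)\in\Omega'}A_n^{(T,v)}>1$ over a finite set $\Omega'$ of near-full $a$-mass, contradicting the trivial constraint $\sum A_n\le 1$. This sidesteps the overflow issue entirely. You and the paper thus disagree on which one-sided bound is the ``easy'' one; since a lower bound on the denominator $\#_\gamma\bm T_n$ naturally yields an \emph{upper} bound on the ratio, your direction is the one that falls out of the factorisation you wrote, and it is worth reconciling this discrepancy with the paper's claim.

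There is a genuine gap in your route to the matching lower bound. Even granting that the overflow fraction vanishes (so that $\sum_{(T,v)}A_n^{(T,v)}\to 1$ in probability), the finite-truncation step does not close: writing
\[
A_n^{(T_0,v_0)}\ \ge\ 1-o_p(1)\ -\!\!\sum_{(T,v)\in\Omega'\setminus\{(T_0,v_0)\}}\!\!A_n^{(T,v)}\ -\ \sum_{(T,v)\notin\Omega'}A_n^{(T,v)}
\]
and inserting the pointwise upper bounds on $\Omega'$ still leaves the tail $\sum_{(T,v)\notin\Omega'}A_n^{(T,v)}$ uncontrolled, and nothing in the hypotheses prevents $\gamma$-mass from escaping to fringe trees of growing size (for instance if $\cE_n$ forces one $\bm T^\kappa$-piece to carry $\Theta(\#_\kappa\bm T_n)$ type-$\gamma$ vertices, which~\eqref{eq:relaxmate} permits). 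Your justification of the overflow bound under the conditional law is likewise incomplete: the comparison $\Pr{\cdot\mid\cE_n}\le\Pr{\cdot}/\Pr{\cE_n}$ degenerates as $\Pr{\cE_n}\to 0$, and no McDiarmid-type bounded-difference argument is available here because altering a single $\bm T^\kappa$-piece can change the $\gamma$-count in the top of the tree by an unbounded amount.
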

\begin{proof}
	Let $(T,v)$ be an arbitrary pointed tree where $T$ has root type $\kappa$ and $v$ has type $\gamma$ and height $h\ge 0$. If $\Pr{f^{\kappa, [h]}(\hat{\bm{T}}(\kappa, \gamma)) = (T,v)}=0$, then $N_{(T,v)}(\bm{T}_n)=0$ for all $n$, hence it suffices to verify 
	\begin{align}
		\label{eq:tos}
		\frac{N_{(T,v)}(\bm{T}_n)}{\#_\gamma \bm{T}_n} \convp \Pr{f^{\kappa, [h]}(\hat{\bm{T}}(\kappa, \gamma)) = (T,v)}
	\end{align}
	for the cases where $ \Pr{f^{\kappa, [h]}(\hat{\bm{T}}(\kappa, \gamma)) = (T,v)}>0$. Let $\Omega$ denote the collection of all such $(T,v)$. Equations~\eqref{eq:relaxmate} and \eqref{eq:as221444} imply that 
	\begin{align}
		\frac{N_{(T,v)}(\bm{T}_n)}{\#_\gamma \bm{T}_n} \ge \Pr{f^{\kappa, [h]}(\hat{\bm{T}}(\kappa, \gamma)) = (T,v)} - o_p(1).
	\end{align}
	That is, for any $\epsilon>0$ it holds with high probability that 
	\begin{align}
		\frac{N_{(T,v)}(\bm{T}_n)}{\#_\gamma \bm{T}_n} \ge \Pr{f^{\kappa, [h]}(\hat{\bm{T}}(\kappa, \gamma)) = (T,v)}(1 - \epsilon).
	\end{align}
	Suppose that there exists $(T_0, v_0) \in \Omega$ and $\epsilon_0, \delta_0>0$ and a subsequence $(n_k)_k$ with
	\[
	\Prb{\frac{N_{(T_0,v_0)}(\bm{T}_n)}{\#_\gamma \bm{T}_n} \ge \Pr{f^{\kappa, [h]}(\hat{\bm{T}}(\kappa, \gamma)) = (T_0,v_0)}(1 + \epsilon_0)} > \delta_0
	\]
	for all $n$ in that subsequence. Letting $\Omega' \subset \Omega$ denote a  finite subset with $\Pr{f^{\kappa, [h]}(\hat{\bm{T}}(\kappa, \gamma)) \in \Omega')}>1-\epsilon$ and $(T_0,v_0) \in \Omega'$, it follows that with probability at least $\delta_0 + o(1)$ for all $n$ in that subsequence
	\[
	1 \ge \sum_{(T,v) \in \Omega'} 	\frac{N_{(T,v)}(\bm{T}_n)}{\#_\gamma \bm{T}_n} \ge (1-\epsilon)^2 + (\epsilon + \epsilon_0)\Pr{f^{\kappa, [h]}(\hat{\bm{T}}(\kappa, \gamma)) = (T_0,v_0)}.
	\]
	Taking $\epsilon$ small enough, this yields a contradiction. This verifies \eqref{eq:tos} for all $(T,v) \in \Omega$ and completes the proof.
\end{proof}

\subsection{Mixtures of types}
Lemma~\ref{te:convfringe2} gives  a criterium for local convergence describing the vicinity of random vertices with a fixed type. We aim to prove limits for the conditioned tree $\bm{T}_n$ describing the local structure near a specified vertex that may have different types.

Given an integer $h \ge 0$, we let $f^{[h]}(\cdot, \cdot)$ denote a  function that takes as input a $\mathfrak{G}$-type tree together with one of its vertices, and returns the fringe subtree at the $h$-th ancestor of that vertex together with the location of the vertex within it (yielding a marked tree where the root and the marked vertex have distance $h$ from each other). If no such ancestor exists, the function returns the marked tree together with the information, that an overflow occurred.

%Analogously as for $\mathfrak{T}^{\kappa, \bullet}$, we may consider the collection $\mathfrak{T}^{\bullet}$ of all  $\mathfrak{G}$-type trees $T^\bullet$ having a marked vertex such that $f^{[h]}(T^\bullet)$ is finite for all $h \ge 0$. Again we may  endow $\mathfrak{T}^\bullet$ with a topology such that (deterministic or distributional) convergence is equivalent to convergence of the  projections $(f^{[h]}(\cdot))_{h \ge 0}$. 

The following observation follows directly from Theorem~\ref{te:convfringe2} and Remark~\ref{re:fu}.

\begin{theorem}
	\label{te:convfringe3} 		Let $\bm{T}_n$ be as in Equation~\eqref{eq:deftn}.
	Suppose that $\Ex{\#_\kappa \bm{T}^\kappa} = 2$ and that Assumption~\eqref{eq:boundassumption} holds. Let $\mathfrak{G}_0 \subset \mathfrak{G}$ be a non-empty subset such that 
	\begin{align}
		\label{eq:fo}
		\frac{\#_\gamma \bm{T}_n}{\#_\kappa \bm{T}_n} \convp \Ex{\#_\gamma \bm{T}^\kappa} \in ]0, \infty[
	\end{align}
	for each $\gamma \in \mathfrak{G}_0\setminus\{\kappa\}$. Let $v_n$ be a vertex of $\bm{T}_n$ drawn uniformly at random from all vertices with type in $\mathfrak{G}_0$. We set $p(\gamma) = \Ex{\#_\gamma \bm{T}^\kappa}$ for $\gamma \in \mathfrak{G}_0\setminus\{\kappa\}$, and $p(\kappa) =1$ in case $\kappa \in \mathfrak{G}_0$. If
	\begin{align}
		\label{eq:add}
		\frac{1}{\#_\kappa \bm{T}_n} \sum_{\gamma \in \mathfrak{G}_0} \#_\gamma \bm{T}_n \convp \sum_{\gamma  \in \mathfrak{G}_0} p(\gamma) < \infty,
	\end{align}
	then 
	\begin{align}
		\label{eq:finalthm}
		\mfL( (\bm{T}_n, v_n) \mid \bm{T}_n) \convp \mathfrak{L}(\hat{\bm{T}}(\kappa, \eta))
	\end{align} 	
	for an independent random type $\eta$ from $\mathfrak{G}_0$, with distribution given by
	\begin{align}
		\Pr{\eta= \gamma'} = p(\gamma') / \sum_{\gamma  \in \mathfrak{G}_0} p(\gamma), \qquad \gamma' \in \mathfrak{G}_0.
	\end{align}
\end{theorem}

Note that if $\mathfrak{G}_0$ is finite, then~\eqref{eq:add} readily follows from~\eqref{eq:fo}. We provide an application of Theorem~\ref{te:convfringe3} to critical sesqui-type trees in Section~\ref{sec:sesqu} and to critical irreducible Galton--Watson trees in Section~\ref{sec:regcrit}

Note that using Theorem~\ref{te:convfringe} we may still establish local convergence if the proportion of vertices of a certain type concentrates at different values as in Equation~\eqref{eq:fo}, but this works only for types that recur infinitely often on the spine in the limit.

\section{Applications}
\label{sec:applications}

We are going to illustrate the general results of the previous section by some examples.

\subsection{Lattices and the Gnedenko local limit theorem}

\label{sec:appendix}

Before we start, let us discuss a few relevant concepts. Given an integer $d \ge 1$, a \emph{lattice in  $\ndZ^d$} is a subset of the form \[
L_{\bm{a}, \bm{A}} = \{\bm{a} + \bm{A} \bm{x} \mid \bm{x} \in \ndZ^d\}
\]
for  $\bm{a} \in \ndZ^d$ and $\bm{A} \in \ndZ^{d \times d}$. For any $\bm{b} \in L_{\bm{a}, \bm{A}}$ it holds that $L_{\bm{a}, \bm{A}} = L_{\bm{b}, \bm{A}}$. In particular, if we shift a lattice by the negative of any of its elements, we obtain a $\ndZ$-linear submodule of $\ndZ^d$.  Note that the matrix $\bm{A}$ is not uniquely determined by the lattice. Any  matrix whose columns form a $\ndZ$-linear generating set of $\{\bm{A} \bm{x} \mid \bm{x} \in \ndZ^d\}$  will do.

Given a non-empty subset $\Omega \subset \ndZ^d$   there is smallest lattice in $\ndZ^d$ containing $\Omega$: We may select $\bm{a} \in \Omega$ and let $\Lambda$ denote the $\ndZ$-span of $\Omega - \bm{a}$. Any $\ndZ$-submodule of $\ndZ^d$ has rank at most $d$, hence there is at least one matrix $\bm{A} \in \ndZ^{d \times d}$ with $\Lambda = \{\bm{A} \bm{x} \mid \bm{x} \in \ndZ^d\}$. Hence $L_{\bm{a}, \bm{A}}$ is a lattice containing $\Omega$. Moreover, $L_{\bm{a}, \bm{A}}$ is a subset of any lattice containing $\Omega$: If $S = L_{\bm{b}, \bm{B}}$ is another lattice containing $\Omega$, then $\bm{a} \in S$ and consequently $S = L_{\bm{a}, \bm{B}}$. This entails that the $\ndZ$-module $S - \bm{a}$ must contain $\Omega - \bm{a}$, and therefore  $\Lambda$ is a submodule of $S - \bm{a}$. Hence $L_{\bm{a}, \bm{A}} \subset S$. 

We say a random vector $\bm{X}$ with values in an abelian group $F \simeq \ndZ^d$ is \emph{aperiodic}, if the smallest subgroup $F_0$ of $F$ that contains the support $\mathrm{supp}(\bm{X})$ satisfies $F_0 = F$. Note that this not really a restriction, since the structure theorem for finitely generated modules over a principal ideal domain ensures that $F_0 \simeq \ndZ^{d'}$ for some $0 \le d' \le d$. We say $\bm{X}$ (and the associated random walk with step distribution~$\bm{X}$) is \emph{strongly aperiodic}, if the smallest semi-group (a subset closed under addition that contains $\bm{0}$)  $F_1$ of $F$ that contains $\mathrm{supp}(\bm{X})$ satisfies $F_1 = F$. This is an actual restriction. For example, if $\bm{X}$ is $2$-dimensional with support $\{ (1,0), (0,1), (1,2) \}$, then it is aperiodic, but not strongly aperiodic (although the support is not even contained on any straight line). 

\begin{proposition}[{\cite[Prop. 1]{MR0279849}}]
	\label{prop:part}
	Let $\bm{X}$ be a random vector in $\ndZ^d$. Let  $\bm{a} + \bm{D} \ndZ^d$ be the smallest lattice  containing the support of $\bm{X}$. Suppose that  $\bm{a} \in \mathrm{supp}(\bm{X})$ and that $\bm{D}$ has full rank, so that $m := |\det \bm{D}|$ is a positive integer. Let $(\bm{X}_i)_{i \ge 1}$ denote independent copies of $\bm{X}$ and set
	\begin{align}
		\bm{S}_n := \sum_{i=1}^n \bm{X}_i.
	\end{align}
	Then:
	\begin{enumerate}
		%		\item The cosets $j \bm{a} + \bm{D} \ndZ^d$, $1 \le j \le m$, are mutually disjoint.
		\item The support of $\bm{S}_m$ generates $\bm{D} \ndZ^d$ as additive group.
		\item For all $k \ge 1$ and $1 \le j \le m$ it holds that $\Pr{\bm{S}_{km +j} \in j \bm{a} + \bm{D} \ndZ^d} = 1$.
	\end{enumerate}
\end{proposition}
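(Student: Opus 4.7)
The plan is to extract two elementary facts about the lattice and feed them into short mechanical arguments: (i) the finite quotient $\ndZ^d/\bm{D}\ndZ^d$ has order $m = |\det \bm{D}|$, so $m\bm{y} \in \bm{D}\ndZ^d$ for every $\bm{y} \in \ndZ^d$; and (ii) by the minimality of $\bm{a} + \bm{D}\ndZ^d$ as the smallest lattice containing $\mathrm{supp}(\bm{X})$ (established in the paragraph preceding the proposition), the differences $\{\bm{x} - \bm{a} : \bm{x} \in \mathrm{supp}(\bm{X})\}$ generate $\bm{D}\ndZ^d$ as an abelian group.

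For part (2) I would first note that each $\bm{X}_i$ lies in $\bm{a} + \bm{D}\ndZ^d$, so $\bm{S}_n$ lies almost surely in $n\bm{a} + \bm{D}\ndZ^d$. Writing $n = km+j$ and splitting $n\bm{a} = j\bm{a} + km\bm{a}$, fact (i) specialised to $\bm{y} = \bm{a}$ absorbs $km\bm{a}$ into $\bm{D}\ndZ^d$, which is precisely the claim.

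For part (1), one direction is immediate: taking $k=0$, $j=m$ in part (2) together with $m\bm{a} \in \bm{D}\ndZ^d$ gives $\mathrm{supp}(\bm{S}_m) \subset \bm{D}\ndZ^d$, and hence the subgroup generated by the support is contained in $\bm{D}\ndZ^d$. For the reverse inclusion I would exploit the hypothesis $\bm{a} \in \mathrm{supp}(\bm{X})$ to produce two specific elements of $\mathrm{supp}(\bm{S}_m)$: namely $m\bm{a}$ (every summand equal to $\bm{a}$) and, for each $\bm{x} \in \mathrm{supp}(\bm{X})$, the element $(m-1)\bm{a} + \bm{x}$ (take $m-1$ copies of $\bm{a}$ and one copy of $\bm{x}$). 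Their difference $\bm{x} - \bm{a}$ therefore lies in the subgroup generated by $\mathrm{supp}(\bm{S}_m)$, and letting $\bm{x}$ range over $\mathrm{supp}(\bm{X})$, fact (ii) forces that subgroup to contain all of $\bm{D}\ndZ^d$.

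No step is genuinely hard. The only observation that requires any thought is fact (i); once that is in hand, both parts are essentially computational. The one mildly clever move is the pairing $\bigl((m-1)\bm{a} + \bm{x}\bigr) - m\bm{a} = \bm{x} - \bm{a}$, which is what allows part (1) to be stated at the exact index $m$ rather than at some larger convolution power.
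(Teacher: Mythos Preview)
Your argument is correct. Note, however, that the paper does not supply its own proof of this proposition: it is quoted verbatim as \cite[Prop.~1]{MR0279849} and used as a black box, so there is no ``paper's proof'' to compare against. Your write-up would serve perfectly well as a self-contained justification in the spirit of the short lattice discussion that precedes the statement.

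One cosmetic point: when you invoke ``$k=0$, $j=m$ in part~(2)'' to get $\mathrm{supp}(\bm{S}_m)\subset \bm{D}\ndZ^d$, you are formally outside the stated range $k\ge 1$. Of course the underlying observation $\bm{S}_n\in n\bm{a}+\bm{D}\ndZ^d$ (which you derive first) applies to every $n\ge 1$, so it is cleaner to appeal to that directly together with $m\bm{a}\in\bm{D}\ndZ^d$, rather than to part~(2).
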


The following is a strengthened and generalized multi-dimensional version of Gnedenko's local limit theorem, that applies to the case of lattice distributed random variables that are aperiodic but not necessarily strongly aperiodic.

\begin{proposition}[Strengthened local central limit theorem for lattice distributions]
	\label{pro:llt}
	Let $\bm{X}$, $m$, $\bm{a}$, $\bm{D}$ and $\bm{S}_n$ be as in Proposition~\ref{prop:part}. 
	Suppose that $\bm{X}$ has a finite  covariance matrix $\bm{\Sigma}$. Our assumptions imply that $\bm{\Sigma}$ is positive-definite. Let
	\[
	\varphi_{\bm{0}, \bm{\Sigma}}(\bm{y}) = \frac{1}{\sqrt{(2 \pi)^d \det \bm{\Sigma}}} \exp\left( - \frac{1}{2} \bm{y} \bm{\Sigma}^{-1} \bm{y} \right)
	\]
	be the  density of the normal distribution $\cN(\bm{0}, \bm{\Sigma})$. Set
	\[
	\bm{a}_n = n \Ex{\bm{X}} \qquad \text{and} \qquad B_n = \sqrt{n},
	\]
	so that $B_n^{-1}(\bm{S}_n - \bm{a}_n) \convd \cN(\bm{0}, \bm{\Sigma})$. Set 	\[
	R_n(\bm{x}) = \max\left(1, {  \| B_n^{-1} (\bm{x} - \bm{a}_n) \|_2^2} \right).
	\]
	Then for each integer $1 \le j \le m$ 
	\begin{align}
		\label{eq:tof}
		\lim_{\substack{n \to \infty \\ n \in j + m \ndZ}} \sup_{\bm{x} \in j\bm{a} + \bm{D} \ndZ^d} R_n(\bm{x})  |B_n^d  \Pr{\bm{S}_n = \bm{x}} - m \varphi_{\bm{0},\bm{\Sigma}}\left(B_n^{-1}(\bm{x} - \bm{a}_n) \right)| = 0.
	\end{align}
\end{proposition}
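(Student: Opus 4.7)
The plan is to reduce~\eqref{eq:tof} to a weighted Gnedenko-type LCLT on $\ndZ^d$ for an auxiliary aperiodic walk, then prove that LCLT by a characteristic-function argument. For the reduction, set $\bm{Y}_i = \bm{D}^{-1}(\bm{X}_i - \bm{a})$ and $\bm{T}_n = \sum_{i=1}^n \bm{Y}_i$, so that $\bm{S}_n = n\bm{a} + \bm{D}\bm{T}_n$. By minimality of the lattice $\bm{a} + \bm{D}\ndZ^d$ the support of $\bm{Y}_1$ contains $\bm{0}$ and generates $\ndZ^d$ as an additive group, and the adjugate formula gives $m\bm{D}^{-1}\bm{a} \in \ndZ^d$. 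For $n = km+j$ and $\bm{x} = j\bm{a} + \bm{D}\bm{l} \in j\bm{a} + \bm{D}\ndZ^d$ we then have $\Pr{\bm{S}_n = \bm{x}} = \Pr{\bm{T}_n = \bm{l} - km\bm{D}^{-1}\bm{a}}$, with the argument on the right in $\ndZ^d$. Combined with the Jacobian identity $m\varphi_{\bm{0}, \bm{\Sigma}}(\bm{D}\bm{z}) = \varphi_{\bm{0}, \bm{\Sigma}_Y}(\bm{z})$ for $\bm{\Sigma}_Y := \bm{D}^{-1}\bm{\Sigma}\bm{D}^{-T}$, this turns~\eqref{eq:tof} into
\[
\sup_{\bm{k} \in \ndZ^d} \max\bigl(1, \tfrac{1}{n}\|\bm{k} - n\bm{\mu}_Y\|_2^2\bigr) \, \bigl|n^{d/2} \Pr{\bm{T}_n = \bm{k}} - \varphi_{\bm{0}, \bm{\Sigma}_Y}\bigl(n^{-1/2}(\bm{k} - n\bm{\mu}_Y)\bigr)\bigr| \longrightarrow 0
\]
with $\bm{\mu}_Y = \Ex{\bm{Y}_1}$. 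Note that the residue-class dependence on $j$ disappears after this reduction since $\bm{T}_n \in \ndZ^d$ for all $n$.

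For the unweighted LCLT for $\bm{T}_n$, since $\bm{0} \in \mathrm{supp}(\bm{Y}_1)$ and the support generates $\ndZ^d$, the equation $|\phi_Y(\bm{t})| = 1$ forces $\bm{t}\cdot \bm{k} \in 2\pi\ndZ$ for every $\bm{k} \in \ndZ^d$, hence $|\phi_Y(\bm{t})| < 1$ on $[-\pi, \pi]^d \setminus \{\bm{0}\}$, and by compactness $\sup_{\|\bm{t}\|_\infty \ge \delta}|\phi_Y(\bm{t})| < 1$ for each $\delta > 0$. Together with the Taylor expansion $\phi_Y(\bm{t}) = 1 + i\bm{\mu}_Y\cdot \bm{t} - \tfrac{1}{2}\bm{t}^\top \bm{M}_Y \bm{t} + o(\|\bm{t}\|^2)$ coming from finite second moments, splitting the Fourier inversion $\Pr{\bm{T}_n = \bm{k}} = (2\pi)^{-d}\int_{[-\pi,\pi]^d}e^{-i\bm{t}\cdot \bm{k}}\phi_Y(\bm{t})^n\, d\bm{t}$ at radius $\delta$ produces an exponentially small outer part and, after $\bm{t} \mapsto \bm{t}/\sqrt{n}$ and dominated convergence, a Gaussian inner part. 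This is the classical unweighted LCLT with error $o(1)$ uniformly in $\bm{k}$.

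To pick up the $R_n$ factor I would, after centering $\bm{Y}_1$, integrate by parts twice in the Fourier integral to obtain
\[
(k_j - n\mu_{Y,j})(k_l - n\mu_{Y,l}) \Pr{\bm{T}_n = \bm{k}} = -(2\pi)^{-d}\int_{[-\pi,\pi]^d} e^{-i\bm{t}\cdot(\bm{k} - n\bm{\mu}_Y)} \partial_{t_j}\partial_{t_l}\psi(\bm{t})^n\, d\bm{t}
\]
with $\psi(\bm{t}) = \phi_Y(\bm{t})e^{-i\bm{\mu}_Y\cdot \bm{t}}$. The two resulting terms $n(n-1)\psi^{n-2}(\partial_j \psi)(\partial_l \psi)$ and $n \psi^{n-1}\partial_j\partial_l \psi$ admit the same radius-$\delta$ splitting, using $\partial_j \psi(\bm{0})=0$ and $\partial_j\partial_l\psi(\bm{0}) = -\Sigma_{Y,jl}$, and after the scaling $\bm{t}\mapsto \bm{t}/\sqrt n$ they Fourier-represent the Gaussian $\varphi_{\bm{0},\bm{\Sigma}_Y}$ multiplied by $y_jy_l$, with error $o(1)$ uniformly in $\bm{k}$. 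Combined with the Gaussian tail bound $\|\bm{y}\|_2^2\varphi_{\bm{0}, \bm{\Sigma}_Y}(\bm{y}) \to 0$ as $\|\bm{y}\|_2\to\infty$, this closes the argument.

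The main obstacle is the quantitative Taylor remainder available from finite variance alone: the little-$o$ in $\phi_Y(\bm{t}) = 1 + i\bm{\mu}_Y\cdot\bm{t} - \tfrac12\bm{t}^\top \bm{M}_Y \bm{t} + o(\|\bm{t}\|^2)$ does not carry a fixed rate. The splitting radius $\delta = \delta(n) \to 0$ must therefore shrink slowly enough that $|\phi_Y(\bm{t})| \le 1 - c\|\bm{t}\|^2$ holds on $\{\|\bm{t}\|_\infty \le \delta\}$ and the scaled integrand is dominated uniformly in~$n$; this is classical and can be executed along the lines of~\cite{MR0279849}.
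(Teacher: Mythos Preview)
Your approach is correct and coincides with what the paper does: the paper does not give a detailed proof of this proposition but instead points to the literature, observing that Stone~\cite{MR0279849} handles the reduction from the merely aperiodic lattice case to the strongly aperiodic case, and that Spitzer~\cite[P10, p.~79]{MR0388547} obtains the $R_n$-strengthened statement in the strongly aperiodic case by the integration-by-parts modification of the Fourier proof. Your change of variables $\bm{Y}_i=\bm{D}^{-1}(\bm{X}_i-\bm{a})$ is precisely Stone's reduction (noting, as you do, that $\bm{0}\in\mathrm{supp}(\bm{Y}_1)$ and that this support generates $\ndZ^d$, which yields $|\phi_Y(\bm{t})|<1$ on $[-\pi,\pi]^d\setminus\{\bm{0}\}$), and your integration-by-parts step is Spitzer's argument; so you have simply written out what the paper sketches by citation.
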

The one-dimensional local limit theorem by~\cite{MR0026275} was generalized by~\cite{rva} to the lattice case of strongly aperiodic multi-dimensional random walk. This was further generalized  by~\cite[Prop. 2]{MR0279849} to the setting considered here, where $\bm{X}$ is aperiodic but not necessarily strongly aperiodic. Furthermore, the version stated in Proposition~\ref{pro:llt} is strengthened by the factor $R_n(\bm{x}) $ in Equation~\eqref{eq:tof}.  This is stronger than the original statement when $\bm{x}$ deviates sufficiently from $\Ex{\bm{S}_n}$. Such a strengthening was obtained by~\cite[Statement P10, page 79]{MR0388547} for the strongly aperiodic case by modifying the proof. This strengthened version in the strongly aperiodic setting may be generalized to Proposition~\ref{pro:llt} analogously as in~\cite{MR0279849}.

\subsection{Sesqui-type trees}
\label{sec:sesqu}

We start with the simplest model of a non-monotype branching type process which already has interesting applications.
Consider a $2$-type branching tree $\bm{T}$ where only vertices of the first type are fertile and receive offspring according to an ordered branching mechanism $\bm{\sigma}_1$. We let $\bm{\xi}_1 = (\xi, \zeta)$ denote the corresponding vector counting the number of occurrences of type $1$ and type $2$ vertices in the random word $\bm{\sigma}_1$. Of course we only consider the case where the root of $\bm{T}$ is fertile. 

In order to avoid degenerate cases we assume that 
\begin{align}
	\Pr{\xi=0}>0 \qquad \text{and} \qquad \Pr{\xi\ge2}>0.
\end{align}
As the mono-type case is already well-understood, we additionally assume that the support 
\begin{align}
	\mathrm{supp}(\bm{\xi}_1) := \{ \bm{v} \in \ndZ^2 \mid \Pr{\bm{\xi}_1 = \bm{v}} >0\}
\end{align} of $\bm{\xi}_1$ is not contained on a straight line, that is
\begin{align}
	\mathrm{supp}(\bm{\xi}_1) \nsubseteq \bm{a} + \ndR \bm{v} \qquad \text{for all} \qquad \bm{a}, \bm{v} \in \ndR^2.
\end{align}
If the support of $\bm{\xi}_1$ is contained in a straight line  we can reduce   every question about $\bm{T}$ to the study of a mono-type $\xi$-Galton--Watson tree. Hence this is not much of a restriction. 

We consider the tree $\bm{T}_n$ obtained by conditioning $\bm{T}$ on having $n$ vertices in total.  We would like to establish a limit describing the vicinity of a uniformly at random selected vertex~$v_n$.  Due to possible periodicities, $n$ may need to be restricted so some infinite subset of the positive integers in order for this to make sense:

\begin{lemma}
	\label{le:lemn}
	Let $\mathrm{supp}(\#\bm{T}) \subset \ndN$ denote the support of the number of vertices of the tree $\bm{T}$. Setting
	\begin{align}
		\label{eq:ad}
		a := \min(\mathrm{supp}(\#\bm{T})) \qquad \text{and} \qquad D := \gcd\left(\mathrm{supp}(\#\bm{T}) - a\right),
	\end{align}
	it holds that $\mathrm{supp}(\#\bm{T}) \subset a + D \ndN$. Conversely, for all sufficiently large integers $n \in a + D \ndN$ 
	\begin{align}
		\Pr{\#\bm{T} = n} > 0.
	\end{align}
\end{lemma}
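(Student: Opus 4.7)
The inclusion $\mathrm{supp}(\#\bm{T}) \subset a + D \ndN$ is immediate from the definitions: every element of $\mathrm{supp}(\#\bm{T}) - a$ lies in $\ndN_0$ and is divisible by $D$ by the definition of the greatest common divisor. The plan for the converse --- that $\Pr{\#\bm{T} = n} > 0$ for all sufficiently large $n \in a + D \ndN$ --- is to combine a grafting construction for realizations of $\bm{T}$ with the Sylvester--Frobenius theorem on numerical semigroups.

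Write $P := \mathrm{supp}(\#\bm{T})$ and introduce the set of additive shifts
\[
\Sigma := \{\, c \in \ndN_0 : c + P \subset P \,\},
\]
which contains $0$ and is closed under addition. I would generate elements of $\Sigma$ as follows: whenever $T^* \in P$ admits a type-$1$ subtree $T_v^*$ rooted at a non-root vertex $v$, replacing $T_v^*$ by any other realization $T' \in P$ yields a valid realization of $\bm{T}$, and therefore $\#T^* - \#T_v^* \in \Sigma$. The standing hypothesis $\Pr{\xi \ge 2} > 0$ supplies some $(\ell_*, m_*) \in \mathrm{supp}(\bm{\xi}_1)$ with $\ell_* \ge 2$. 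For each $n \in P$, I would assemble a realization $T^*(n)$ whose root has offspring $(\ell_*, m_*)$, whose first type-$1$ subtree is a minimal realization of size $a$, whose second subtree has size $n$, and whose remaining $\ell_* - 2$ subtrees are all minimal. Grafting at the size-$a$ subtree then produces
\[
c(n) \,:=\, 1 + m_* + n + (\ell_* - 2) a \,\in\, \Sigma.
\]

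Since $\Sigma$ is closed under addition, the set $\Sigma - \Sigma$ is a subgroup of $\ndZ$. By the previous construction it contains every difference $c(n) - c(n') = n - n'$ with $n, n' \in P$, hence it contains $P - a$ and consequently $D \ndZ$ by definition of $D$. The reverse inclusion $\Sigma - \Sigma \subset D \ndZ$ is clear from $\Sigma \subset P - a \subset D \ndN_0$, so $\Sigma - \Sigma = D \ndZ$. Applying the Sylvester--Frobenius theorem to the sub-semigroup $\Sigma/D \subset \ndN_0$ (which contains $0$ and generates the full group $\ndZ$) shows that $\Sigma$ contains every sufficiently large multiple of $D$. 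Thus $kD \in \Sigma$ for all sufficiently large $k$, whence $a + kD \in \Sigma + \{a\} \subset P$, which is exactly the desired statement.

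The main subtlety I expect is the grafting step: one must verify that substituting one type-$1$ subtree of $T^*$ for any other realization of $\bm{T}$ really yields a realization of $\bm{T}$ with positive probability. This holds because the offspring of each unaffected vertex is unchanged, while the offspring of the grafting point is reset to that of the new subtree's root, but keeping careful track of the multi-type book-keeping is the single place where attention is required. The degenerate case $P = \{a\}$ forces $D = 0$ and makes the second conclusion vacuous, so it needs no separate treatment.
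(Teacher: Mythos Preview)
Your argument is correct and takes a genuinely different route from the paper. The paper proceeds via generating functions: it rewrites the recursion $\bm{T}(z)=zf(\bm{T}(z),z)$ as $\bm{T}(z)=\sum_k E_k(z)\bm{T}(z)^k$, invokes an external lemma of Bell--Burris--Yeats to express $D$ as $\gcd\bigcup_{k\ge 0}S_k$ with $S_k=(k-1)a+F_k$, massages this into $D=\gcd\bigl(S_1\cup\bigcup_{k\ge 2}(S_k+S_0)\bigr)$, applies Schur's theorem to that numerical semigroup, and then carries out an explicit iterative ``growth construction'' of a tree with $a+mD$ vertices from the resulting additive decomposition of $mD$. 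Your approach bypasses the generating functions and the cited lemma entirely: you work directly with the shift semigroup $\Sigma=\{c:c+P\subset P\}$, observe via a single grafting construction (using one $(\ell_*,m_*)$ with $\ell_*\ge 2$) that $c(n)=1+m_*+n+(\ell_*-2)a\in\Sigma$ for every $n\in P$, and read off $\Sigma-\Sigma=D\ndZ$ immediately. Both arguments ultimately rest on the Sylvester--Frobenius/Schur fact about numerical semigroups, but yours is shorter, self-contained, and does not require the Bell--Burris--Yeats characterisation of $D$. The paper's route, on the other hand, ties the period $D$ explicitly to the coefficient structure of $f(x,y)$, which is natural preparation for the analytic arguments that follow in the next lemma.

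Two minor points worth tightening in a write-up: you occasionally write ``$T^*\in P$'' or ``$T'\in P$'' where $P$ is a set of integers, so say ``a realisation of $\bm{T}$ of size in $P$'' instead; and your parenthetical remark that $\Sigma-\Sigma$ is a subgroup deserves one line of justification (it follows since $\Sigma$ is an additive submonoid of $\ndN_0$). Neither affects correctness.
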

\begin{proof}
	It is clear by construction that $\mathrm{supp}(\#\bm{T}) \subset a + D \ndN$. The difficult and important part  is the converse direction. The generating series $Z(z) := \Ex{z^{\#\bm{T}}}$ satisfies the recursion
	\begin{align}
		\label{eq:recu}
		Z(z) = z f(Z(z), z)
	\end{align}
	with $f(x,y) := \Ex{x^{\xi} y^{\zeta}}$. We may rewrite this as
	\begin{align}
		Z(z) = \sum_{k \ge 0} E_k(z) Z(z)^k 
	\end{align}
	for power series $(E_k(z))_{k \ge 0}$ with non-negative coefficients uniquely determined by $
	zf(y,z) = \sum_{k \ge 0} E_k(z) y^k$.
	Let $F_k$ denote the collection of integers $i$ such that the coefficient of the monomial $z^i$ in $E_k(z)$ is positive.  We are going to use the commutative operation
	\[
	I + J := \{i + j \mid i \in I, j \in J\}
	\]
	for subsets $I, J \subset \ndZ$, and set $k + J := \{k\} + J$ for any integer $k$. Note that by definition $I + \emptyset = \emptyset$. It was shown by \cite[Lem. 25]{MR2240769} that
	\begin{align}
		\label{eq:BIG}
		D = \gcd \bigcup_{k \ge 0} S_k
	\end{align}
	for
	\begin{align}
		\label{eq:defSk}
		S_k := (k-1)a + F_k, \qquad k \ge 0.
	\end{align}
	Note that $S_k = \emptyset$ whenever $F_k = \emptyset$. Moreover, note that
	\begin{align}
		\label{eq:insup}
		\Pr{ (\xi, \zeta) = (k,b-1)} >0
	\end{align}
	for all $k \ge 0$ and $b \in F_k$. We are going to argue that
	\begin{align}
		\label{eq:toshow1}
		D = \gcd\left(S_1 \cup \bigcup_{k \ge 2} (S_k + S_0) \right).
	\end{align}
	It is clear that $D \mid S_k$ for all $k \ge 0$ by \eqref{eq:BIG}, so clearly $D$ divides the right-hand side of Equation~\eqref{eq:toshow1}. Conversely, let $r$ be a divisor of the right-hand side of this Equation. As $0 \in S_0$ it follows that $r \mid S_k$ for all $k \ge 1$. Moreover, we assumed that there exists $k \ge 2$ with $\Pr{\xi=k}>0$, implying $S_k \ne \emptyset$. As $r \mid S_k + S_0$ and $r \mid S_k \ne \emptyset$ it follows that $r \mid S_0$. Hence $r \mid S_k$ for all $k \ge 0$ and hence, by Equation~\eqref{eq:BIG}, $r \mid D$. This completes the verification of Equation~\eqref{eq:toshow1}.
	
	It follows by Schur's theorem, stated in~\cite[Thm. 3.24]{MR2172781}, that there exists $M \ge 1$ such that for all $m \ge M$ 
	\[
	mD \in \ndN_0 S_1 + \sum_{k \ge 2} \ndN_0 (S_k + S_0).
	\]
	That is, $mD$ may be expressed as a finite sum of terms of the form $\lambda t$ with $t \in S_1$, $\lambda \in \ndN$, and terms of the form $\mu(s + r)$ with $s \in S_k$ for some $k \ge 2$, $r \in S_0$, and $\mu \in \ndN$. 	 We are going to argue that  there is a tree $T$ with $a + Dm$ vertices that satisfies
	\begin{align}
		\label{eq:insupport}
		\Pr{\# \bm{T} = T} >0.
	\end{align}
	To this end, note that
	\begin{align}
		\label{eq:doh}
		a = 1 + \min\{i \ge 0 \mid \Pr{ (\xi,\zeta)=(0, i)} > 0 \} = \min F_0.
	\end{align}
	We start the growth construction of $T$ with a single root vertex that we declare as \emph{marked}. We iterate over the finitely many terms in the sum expression of $mD$, and each step takes as input a tree with a single marked leaf and outputs a bigger tree with a single marked leaf:
	\begin{enumerate}
		\item If the summand is of the form $\lambda t$ with $\lambda \in \ndN$ and $t \in S_1$, then the marked leaf receives $t-1$ type $2$  offspring vertices and a single type $1$ offspring vertex that becomes the new marked leaf. Note that the outdegree $(1,t-1)$ lies in the support of $\bm{\xi}_1$ by Equation~\eqref{eq:insup}. We do this precisely $\lambda$ many times. Hence in total we added $\lambda t$ vertices.
		\item If the summand is of the form $\mu(r + s)$ with $\mu \in \ndN$, $r \in S_k$ for some $k \ge 1$ and $s \in S_0$, we do the following. By Equation~\eqref{eq:defSk} there is $b \in F_k$ and $c \in F_0$ such that
		\begin{align*}
			r + s &= (k-1)a + b -a + c \\
			&= (k-2)(a-1) + (c-1) + k + (b-1).
		\end{align*}
		The marked leaf receives mixed offspring according to $(k, b-1)$. The first offspring of type $1$ becomes the new marked leaf. The second offspring of type $1$ receives offspring $(0, c-1)$. The remaining $k-2$ offspring vertices of type $1$ each receive offspring $(0,a-1)$. Note that by Equation~\eqref{eq:insup} all non-marked vertices have an outdegree that lies in the support of $\bm{\xi}_1$. We perform this precisely $\mu$ many times. Hence in total we added $\mu(r+s)$ vertices.
	\end{enumerate}
	After iterating over all summands we are left with a tree  having $1 + mD$ vertices (remember that we also have to count the root vertex) that has a marked leaf. All non-marked vertices have an outdegree that lies in the support of $\bm{\xi}_1$.  The marked vertex receives offspring $(0, a-1)$, resulting in a tree $T$ with $a + mD$ vertices that satisfies~\ref{eq:insupport}. As we may perform this construction for all $m \ge M$ this completes the proof.
\end{proof}

Setting $\kappa=1$, the tree $\bm{T}^\kappa$ consists of a root of type $1$ with offspring according to $\bm{\xi}_1$ and no further descendants. Hence in order to apply Theorem~\ref{te:convfringe3} we need to assume that 
\begin{align}
	\label{eq:asy}
	\Ex{\xi} = 1 \qquad \text{and}  \qquad 0<\Ex{\zeta}<\infty.
\end{align} 
It holds trivially that $\# \bm{T} \ge \#_1 \bm{T}$, and $\#_1 \bm{T}$ is distributed like the total progeny of a $\xi$-Galton--Watson process. As we assume that $\Ex{\xi}=1$, it follows that $\Pr{\#_1 \bm{T} = n} = \exp(o(n))$  (see \cite[Thm. 18.1]{MR2908619}) and consequently: \begin{align}
	\Pr{\# \bm{T} = n} = \exp(o(n)).
\end{align}
Hence the only prerequisite that we still need to check in order to apply Theorem~\ref{te:convfringe3} is that
\begin{align}
	\label{eq:proportion}
	\frac{ (\#_1 \bm{T} \mid \# \bm{T} = n )}{n} \convp \frac{1}{1 + \Ex{\zeta}}.
\end{align}
Indeed, if~\eqref{eq:proportion} is verified, then~\eqref{eq:boundassumption} holds for $s_n := cn$  for some fixed constant $0<c<1/(1 - \Ex{\zeta})$. Furthermore,~\eqref{eq:proportion} and $\#\bm{T} = \#_1\bm{T} + \#_2\bm{T}$ imply $	\frac{ \#_2 \bm{T}_n }{n} \convp \frac{\Ex{\zeta}}{1 + \Ex{\zeta}}$ and hence $\frac{\#_2 \bm{T}_n}{\#_1 \bm{T}_n} \convp \Ex{\zeta}$, thus verifying~\eqref{eq:fo} for $\gamma:=2$.

We verify Equation~\eqref{eq:proportion} in the case that $(\xi, \zeta)$ has a finite covariance matrix~$\bm{\Sigma}$. The following is an extension of~\cite[Lem. 22]{StEJC2018}, where  a combinatorial setting was considered that is related to the case where one additionally assumes that $a=1$ and that $(\xi, \zeta)$ has finite exponential moments.
\begin{lemma}
	\label{le:lembiv}
	Assume that $\Pr{\xi=0}>0$, $\Pr{\xi \ge 2}>0$, and that the support of $(\xi, \zeta)$ is not contained on a straight line.
	Suppose that $\Ex{\xi}=1$ and that $(\xi, \zeta)$ has a finite covariance matrix $\bm{\Sigma}$. Let $a, D \ge 1$ be defined as in Equation~\eqref{eq:ad}. 
	\begin{enumerate}
		\item There is a rank $2$ matrix $\bm{D} \in \ndZ^{2 \times 2}$ such that the support of $(\xi, \zeta)^\intercal$ is contained in the lattice $(0, a-1)^\intercal + \bm{D}\ndZ^2$ and in no proper sublattice. It holds that
		\begin{align}
			\label{eq:doo}
			\mathfrak{d}  := \frac{|\det \bm{D}|}{D} \in \ndN.
		\end{align}
		\item As $n \in a + D \ndZ$ tends to infinity, we have
		\begin{align}
			\label{eq:asymptoticT}
			\Pr{\# \bm{T} = n} \sim D \sqrt{\frac{1 + \Ex{\zeta}}{2 \pi \Va{\xi}}} n^{-3/2}.
		\end{align}
		\item Set
		\begin{align}
			\mu = \frac{1}{1 + \Ex{\zeta}} \qquad \text{and} \qquad \sigma^2 = \frac{\det \bm{\Sigma}}{\Va{\xi}(1 + \Ex{\zeta})^3}.
		\end{align}
		There is a sequence $(j_n)_n$ with values in $\{1, \ldots, \mathfrak{d}\}$ with the following property. As $n \in a + D \ndZ$ tends to infinity, 
		\begin{align}
			\label{eq:makeuniform}
			\sqrt{n} \Pr{\#_1\bm{T} = \ell \mid \#\bm{T} = n} = \frac{\mathfrak{d}}{\sigma \sqrt{2\pi}} \exp \left( -\frac{x^2}{2 \sigma^2}\right) + o(1)
		\end{align}
		uniformly for all $x$ satisfying 
		\begin{align}
			\ell := \mu n +x \sqrt{n} \in j_n + \mathfrak{d}\ndZ.
		\end{align}
		The probability in~\eqref{eq:makeuniform} equals zero when $\ell \notin j_n + \mathfrak{d}\ndZ$.
		Hence
		\begin{align}
			\label{eq:clt}
			\frac{(\#_1\bm{T} \mid \#\bm{T} = n) - n \mu}{\sqrt{n}} \convdis \cN(0, \sigma^2).
		\end{align}
	\end{enumerate}	
\end{lemma}
\begin{proof}
	We set
	\begin{align}
		\bm{X} := \colvec{2}{\xi -1}{\zeta+1}
	\end{align}
	and let 
	\begin{align}
		\bm{X}_i = \colvec{2}{\xi_i-1}{\zeta_i +1}, \qquad i \ge 1
	\end{align}
	denote independent copies of $\bm{X}$. We also set 
	\begin{align}
		\bm{S}_\ell = \sum_{i=1}^\ell \bm{X}_i \qquad \text{and} \qquad \bm{y}_n = \colvec{2}{-1}{n}.
	\end{align}
	Equations~\eqref{eq:red2mono} and~\eqref{eq:oins} tell us that for $\ell \ge 1$
	\begin{align}
		\label{eq:nlpre}
		\Pr{ \#\bm{T} = n, \#_1\bm{T} = \ell} &= \Prb{  \bm{S}_\ell = \bm{y}_n, \sum_{i=1}^k \xi_i \ge k \text{ for $0 < k < \ell$}},
	\end{align}
	with $(\xi_i,\zeta_i)_{i \ge 1}$ denoting independent copies of $(\xi, \zeta)$.
	By the cycle lemma (see for example~\cite{MR0138139}), this simplifies to
	\begin{align}
		\label{eq:nl}
		\Pr{ \#\bm{T} = n, \#_1\bm{T} = \ell} = \frac{1}{\ell}\Prb{ \bm{S}_\ell = \bm{y}_n}.
	\end{align}
	The support of $\bm{\xi}_1$  was assumed to be not contained in a straight line, hence the same goes for $(\xi-1, \zeta+1)$.  Hence  the covariance matrix $\bm{\Sigma}$ (of both $\bm{\xi}_1$ and the shifted version $(\xi-1, \zeta+1)$) is positive definite. Furthermore, there is a rank $2$ matrix  $\bm{D} \in \ndZ^2$ such that  the support of $(\xi-1, \zeta+1)^\intercal$ is contained in the lattice $\bm{a} + \bm{D}\ndZ^2$, with $\bm{a} := (-1,a)^\intercal$, and in no proper sublattice. (Recall that $(0, a-1)$ lies in the support of $\bm{\xi}_1$ by Equation~\eqref{eq:doh}.)  We set
	\begin{align}
		m := |\det \bm{D}|.
	\end{align} 
	Let $n \in a + D \ndZ$ be sufficiently large so that $\Pr{\#\bm{T} = n}>0$ by Lemma~\ref{le:lemn}. By Equation~\eqref{eq:nl} this means that for at least one $1 \le \ell \le n$ it holds that $\bm{y}_n$ lies in the support of $\bm{S}_\ell$. 
	By Proposition~\ref{prop:part} it follows that there is at least one integer $1 \le j_n \le m$ with
	\begin{align}
		\bm{y}_n \in j_n \bm{a} + \bm{D} \ndZ^2.
	\end{align}
	Note that for all $j,j' \in \ndZ$ the following statements are equivalent:
	\begin{enumerate}
		\item $(j \bm{a} + \bm{D} \ndZ^2) \cap (j' \bm{a} + \bm{D} \ndZ^2) \neq \emptyset$.
		\item $\bm{0} \in (j-j')\bm{a} + \bm{D} \ndZ^2$.
		\item $(j-j')\bm{a} \in \bm{D}\ndZ^2$.
		\item $j \bm{a} + \bm{D} \ndZ^2 = j' \bm{a} + \bm{D} \ndZ^2$.
	\end{enumerate}
	The set of all $j \in \ndZ$ with $j \bm{a} \in \bm{D} \ndZ^2$ is a subgroup of the integers. By Proposition~\ref{prop:part} it contains $m$. Hence it is generated by some integer $\mathfrak{d} \ge 1$ satisfying
	\begin{align}
		\mathfrak{d} | m.
	\end{align}
	We will postpone showing that we assume that $\mathfrak{d}$ may be chosen as in~\eqref{eq:doo}.  Moreover, we have 
	\begin{align}
		\bm{y}_n \in j \bm{a} + \bm{D} \ndZ^2 \qquad \text{if and only if} \qquad j \in j_n + \mathfrak{d} \ndZ.
	\end{align}
	Hence, from now on we may assume additionally that $1 \le j_n \le \mathfrak{d}$. By Equation~\eqref{eq:nl} it follows that any integer $\ell \ge 1$ with $\Pr{ \#\bm{T} = n, \#_1\bm{T} = \ell}>0$ must lie in the lattice $j_n + \mathfrak{d} \ndZ$. It is clear that not the entire lattice has this property. However, we will argue that this is true for all $\ell \in j_n + \mathfrak{d} \ndZ$ that concentrate in a $\sqrt{n}$ range around $\mu n$. Given $\ell \in j_n + \mathfrak{d} \ndZ$ with $\ell \ge 1$ we may write
	\begin{align}
		\ell = \mu n + x_\ell \sqrt{n}.
	\end{align}
	Let $M>0$ be a fixed constant. It holds uniformly for all $\ell$ with $|x_\ell| < M$ that
	\begin{align}
		\frac{1}{\sqrt{\ell}} \left( \bm{y}_n - \ell \Ex{\bm{X}} \right) = \colvec{2}{0}{-x_\ell (1 + \Ex{\zeta})^{3/2}} + o(1).
	\end{align}
	Hence
	\begin{align*}
		-\frac{1}{2\ell}&(\bm{y}_n - \ell \Exb{\bm{X}})^\intercal \bm{\Sigma}^{-1} (\bm{y}_n - \ell \Exb{\bm{X}}) \\
		&= -\frac{1}{2}  (0,-x_\ell (1 + \Ex{\zeta})^{3/2}) 
		\begin{pmatrix}
			\Va{\xi} & \mathrm{Cov}(\xi,\zeta)\\
			\mathrm{Cov}(\xi,\zeta) & \Va{\zeta}
		\end{pmatrix}^{-1}
		\colvec{2}{0}{-x_\ell (1 + \Ex{\zeta})^{3/2}} \\
		& \quad\, + o(1) \\
		&= - \frac{x_\ell^2 \Va{\xi} (1 + \Ex{\zeta})^3}{2 \det \bm{\Sigma} } + o(1).
	\end{align*}
	It follows from the multivariate local limit theorem in Proposition~\ref{pro:llt} that
	\begin{align*}
		\frac{1}{\ell} \Pr{\bm{S}_\ell = \bm{y}_n} %\sim \frac{(1 + \Exb{\zeta})^2 m }{n^2 2 \pi \sqrt{\det \bm{\Sigma}}} \exp\left(- \frac{x_\ell^2}{2 \sigma^2}\right)
		\sim \frac{(1 + \Exb{\zeta})^2 m }{n^2 2 \pi \sqrt{\det \bm{\Sigma}}} \exp\left(- \frac{x_\ell^2}{2 \sigma^2}\right).
	\end{align*}
	Let  $\varphi_{0, \sigma^2}(\cdot)$ denote the density of the centered normal distribution with variance $\sigma^2$. By Equation~\eqref{eq:nl} it follows that
	\begin{align}
		\label{eq:lltapp}
		\Pr{ \#\bm{T} = n, \#_1\bm{T} = \ell}  \sim  \frac{1}{n^2} \sqrt{\frac{1 + \Ex{\zeta}}{ 2 \pi \Va{\xi}}} m \varphi_{0, \sigma^2}(x_\ell)
	\end{align}
	uniformly for $|x_\ell| <M$. Setting 
	\begin{align}
		p_n := \frac{m}{\mathfrak{d}} n^{-3/2}\sqrt{\frac{1 + \Ex{\zeta}}{ 2 \pi \Va{\xi}}},
	\end{align}
	this entails that for any fixed real numbers $a_1<a_2$
	\begin{multline}
		\label{eq:cltdddd}
		p_n^{-1}\Prb{ \#\bm{T} = n, a_1 \le \frac{\#_1\bm{T} - n \mu}{\sqrt{n}} \le a_2} \\ \sim  \frac{\mathfrak{d}}{\sqrt{n}} \sum_{\substack{\ell \in j_n + \mathfrak{d} \ndZ \\  a_1 \le x_{\ell} \le a_2}}  \varphi_{0,\sigma^2}(x_{\ell}) \sim \int_{a_1}^{a_2} \varphi_{0, \sigma^2}(x) \,\text{d}x.
	\end{multline}
	We are going to argue that for each $\epsilon>0$ we may choose $M>0$ so that for all sufficiently large $n$
	\begin{align}
		\label{eq:toshow11}
		p_n^{-1} \Prb{ \#\bm{T} = n, |\#_1\bm{T} - n \mu| \ge M \sqrt{n} } \le \epsilon.
	\end{align}
	To this end, suppose that $1 \le \ell \le n$ satisfies $\ell \in j_n + \mathfrak{d} \ndZ$ and $|x_\ell| \ge M$. The multivariate local limit theorem in Proposition~\ref{pro:llt} entails that there is a constant $C>0$ that does not depend on $\ell$ such that 
	\begin{align}
		\Prb{ \bm{S}_\ell = \bm{y}_n} \le \frac{C}{n x_\ell^2 }.
	\end{align}
	Hence
	\begin{align}
		n^{3/2} \sum_{\substack{1 \le \ell \le n \\ |x_\ell| \ge M}} \frac{1}{\ell}\Prb{ \bm{S}_\ell = \bm{y}_n} \le \sqrt{n} \sum_{\substack{1 \le \ell \le n \\ |x_\ell| \ge M}} \frac{C}{\ell x_\ell^2}.
	\end{align}
	If we restrict to summands with $|x_\ell| \ge n^{1/4 + \delta}$ for any fixed $0<\delta<1/4$, we may be bound this by
	\begin{align}
		n^{-2\delta} \sum_{\substack{1 \le \ell \le n}} \frac{C}{\ell} = o(1).
	\end{align}
	If we restrict to summands with $M \le |x_\ell| \le n^{1/4 + \delta}$ instead, then $\ell = \Theta(n)$ and  we obtain the bound
	\begin{align}
		O(n^{-1/2}) \sum_{\substack{1 \le \ell \le n \\ M \le |x_\ell| \le n^{1/4 + \delta}}} \frac{1}{x_\ell^2} = O(1) \int_{M}^{\infty} \frac{1}{x^2} \,\text{d}x.
	\end{align}
	Taking $M$ large enough, this bound is smaller than $\epsilon/2$. This verifies Equation~\eqref{eq:toshow11}.
	
	Combining Equations~\eqref{eq:cltdddd} and ~\eqref{eq:toshow11} we obtain
	\begin{align}
		\label{eq:asymptoticaabT}
		\Pr{\# \bm{T} = n} \sim p_n = \frac{m}{\mathfrak{d}} \sqrt{\frac{1 + \Ex{\zeta}}{2 \pi \Va{\xi}}} n^{-3/2}.
	\end{align}
	Using~\eqref{eq:lltapp} it follows that
	\begin{align}
		\sqrt{n} \Pr{\#_1\bm{T} = \ell \mid \#\bm{T} = n} = \frac{\mathfrak{d}}{\sigma \sqrt{2\pi}} \exp \left( -\frac{x^2}{2 \sigma^2}\right) +  o(1)
	\end{align}
	uniformly for  $\ell \in j_n + \mathfrak{d} \ndZ$ with $|x_\ell| \le M$. Using~\eqref{eq:toshow11}, it follows that this remains true if we remove the condition that $|x_\ell|$ needs to be bounded.
	
	By~\eqref{eq:cltdddd} we obtain
	\begin{align}
		\frac{(\#_1\bm{T} \mid \#\bm{T} = n) - n \mu}{\sqrt{n}} \convdis \cN(0, \sigma^2).
	\end{align}
	It remains to show that the integer $\mathfrak{d}$ (as defined in this proof) may be chosen as in~\eqref{eq:doo}. That is, we have to show that
	\begin{align}
		\label{eq:do}
		\mathfrak{d} = m/D.
	\end{align} 
	Some jokes have been told about how mathematicians solve problems by reducing them to previously solved problems, even when it's not the most direct solution. The following short but not necessarily direct justification of~\eqref{eq:do} might fit into this category. For any sufficiently large $K>0$ there is a random variable $\bm{\xi}_1^{(K)} = (\xi^{(K)}, \zeta^{(K)})$ with support $\mathrm{supp}(\bm{\xi}_1^{(K)}) = \{ \bm{x} \in \mathrm{supp}(\bm{\xi}_1) \mid \| \bm{x} \|_2 < K\}$ that satisfies $\Ex{\xi^{(K)}}=1$. Let $\bm{T}^{(K)}$ denote the sesqui-type tree defined for $\bm{\xi}_1^{(K)}$ instead of $\bm{\xi}_1$. We assume that $K$ is sufficiently large so that the support of $\bm{\xi}_1^{(K)}$ is not contained on a straight line. Lemma~\ref{le:lemn} and everything we have shown so far applies to $\bm{T}^{(K)}$, yielding an analogon $D^{(K)}$ to $D$ that satisfies $D^{(K)} | D$, an analogon $\bm{D}^{(K)}$ to $\bm{D}$ that satisfies $ \bm{D}^{(K)} \ndZ^2 \subset \bm{D} \ndZ^2$, and an analogon $\mathfrak{d}^{(K)}$ to $\mathfrak{d}$ that satisfies $\mathfrak{d}^{(K)} | \mathfrak{d}$. The construction of these constants implies that for $K$ sufficiently large equality holds, meaning we have $\mathfrak{d}^{(K)} = \mathfrak{d}$, $D^{(K)} = D$, and may assume that $\bm{D} = \bm{D}^{(K)}$. As $\bm{\xi}^{(K)}_1$ is bounded and hence has finite exponential moments, we may apply a singularity analysis result by \cite[Thm. 28]{MR2240769}, yielding
	\begin{align}
		\Pr{\# \bm{T}^{(K)} = n} \sim  D \sqrt{\frac{1 + \Ex{\zeta^{(K)}}}{2 \pi \Va{\xi^{(K)}}}} n^{-3/2}
	\end{align}
	as $n \in a + D \ndZ$ tends to infinity. At the same time, Equation~\eqref{eq:asymptoticaabT} applied to $\bm{T}^{(K)}$ yields 
	\begin{align}
		\Pr{\# \bm{T}^{(K)} = n} \sim  \frac{m}{\mathfrak{d}} \sqrt{\frac{1 + \Ex{\zeta^{(K)}}}{2 \pi \Va{\xi^{(K)}}}} n^{-3/2}.
	\end{align}
	It follows that
	\[
	D = \frac{m}{\mathfrak{d}}.
	\]
	This completes the proof.
\end{proof}

In the case that $(\xi, \zeta)$ has finite exponential moments,  Equation~\eqref{eq:asymptoticT} may also be obtained using singularity analysis methods, see~\cite[Thm. 28]{MR2240769}. Here we only assumed a finite covariance matrix.

Having assured the concentration of the proportion of types in Equation~\eqref{eq:proportion}, we readily obtain by Theorem~\ref{te:convfringe3}:

\begin{theorem}
	\label{te:daone}
	Under the same assumptions as in Lemma~\ref{le:lembiv}, it follows that
	\begin{align}
		\mfL( (\bm{T}_n, v_n) \mid \bm{T}_n) \convp \mathfrak{L}(\hat{\bm{T}}(1, \eta))
	\end{align} 
	as $n \in a + D \ndN$ tends to infinity. Here $\eta$ denotes the independent Bernoulli-distributed random type from $\{1,2\}$ with distribution given by \begin{align}
		\Pr{\eta=1} = \frac{1}{1 + \Ex{\zeta}} \qquad \text{and} \qquad \Pr{\eta=2} = \frac{\Ex{\zeta}}{1 + \Ex{\zeta}}.
	\end{align}
\end{theorem}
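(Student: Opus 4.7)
The plan is to obtain Theorem~\ref{te:daone} as a direct application of Theorem~\ref{te:convfringe3} with $\kappa=1$ and $\mathfrak{G}_0=\{1,2\}$, using the conditioning events $\cE_n = \{\#\bm{T}=n\}$ along $n \in a + D\ndN$. All three hypotheses of that general theorem will follow transparently from Lemma~\ref{le:lembiv} combined with the deterministic identity $\#_1\bm{T}_n + \#_2\bm{T}_n = n$ that holds on $\cE_n$, so the proof reduces to a verification.

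First I would check the criticality condition $\Ex{\#_1 \bm{T}^1}=2$. By construction $\bm{T}^1$ is a root of type~$1$ with offspring distributed as $\bm{\xi}_1=(\xi,\zeta)$ and no further descendants, so $\#_1 \bm{T}^1 = 1+\xi$, and the assumption $\Ex{\xi}=1$ yields the claim. The same decomposition records $\#_2 \bm{T}^1 = \zeta$ and hence $\Ex{\#_2 \bm{T}^1} = \Ex{\zeta}$. Next I would verify Assumption~\eqref{eq:boundassumption}: the local limit asymptotic~\eqref{eq:asymptoticT} gives $\Pr{\cE_n} = \Theta(n^{-3/2})$ along $n \in a + D\ndN$, which dominates $\exp(-\epsilon s_n)$ for any sublogarithmic sequence $s_n\to\infty$ (e.g.\ $s_n = \log\log n$); and the central limit theorem~\eqref{eq:clt} then gives $\#_1 \bm{T}_n = n\mu + O_p(\sqrt n)$, so $\Pr{\#_1 \bm{T}_n \ge s_n} \to 1$ holds trivially. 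Finally, condition~\eqref{eq:fo} with $\gamma=2$ is to be obtained by writing
\[
 \frac{\#_2 \bm{T}_n}{\#_1 \bm{T}_n} = \frac{n - \#_1 \bm{T}_n}{\#_1 \bm{T}_n}
\]
and invoking $\#_1\bm{T}_n/n \convp \mu = 1/(1+\Ex{\zeta})$, so the ratio converges in probability to $(1-\mu)/\mu = \Ex{\zeta} = \Ex{\#_2 \bm{T}^1}$. The summability condition $p(1)+p(2) = 1 + \Ex{\zeta} < \infty$ is immediate.

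Theorem~\ref{te:convfringe3} will then deliver $\mathbb{P}((\bm{T}_n, v_n)\mid \bm{T}_n) \convp \mathfrak{L}(\hat{\bm{T}}(1,\eta))$, and the final step would be to record that the weights $p(1)=1$ and $p(2)=\Ex{\zeta}$ give $\Pr{\eta=\gamma'} = p(\gamma')/(1+\Ex{\zeta})$, which is precisely the Bernoulli law stated in the theorem. I do not expect a real obstacle: all nontrivial analytic content has already been absorbed into Lemma~\ref{le:lembiv}, whose local central limit theorem for $(\#\bm{T},\#_1\bm{T})$ supplies both the polynomial lower bound on $\Pr{\cE_n}$ and the concentration of $\#_1\bm{T}_n/n$ needed as input to the general convergence theorem. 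If anything, the only care required is restricting $n$ to the arithmetic progression $a + D\ndN$ throughout, so that Lemma~\ref{le:lembiv} applies and the uniform pick $v_n$ is well defined.
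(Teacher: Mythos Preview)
Your approach is exactly the paper's: verify the hypotheses of Theorem~\ref{te:convfringe3} with $\kappa=1$ and $\mathfrak{G}_0=\{1,2\}$ using Lemma~\ref{le:lembiv}, and read off the conclusion. The paper carries out this same verification in the paragraphs immediately preceding the statement of Theorem~\ref{te:daone}.

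One slip: in checking Assumption~\eqref{eq:boundassumption} you need $\exp(-\epsilon s_n)=o(\Pr{\cE_n})$ for every $\epsilon>0$, and with $\Pr{\cE_n}\asymp n^{-3/2}$ this forces $s_n/\log n\to\infty$. A sublogarithmic choice such as $s_n=\log\log n$ gives $\exp(-\epsilon s_n)=(\log n)^{-\epsilon}\gg n^{-3/2}$, so the required domination fails. Take instead any $s_n$ with $\log n = o(s_n)$ and $s_n = o(n)$, for instance $s_n=(\log n)^2$; then both halves of~\eqref{eq:boundassumption} hold (the second from $\#_1\bm{T}_n=\mu n+O_p(\sqrt n)$), and the rest of your argument goes through unchanged.
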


See also~\cite[Thm. 27]{StEJC2018} for a similar result in the combinatorial setting of random unlabelled $\cR$-enriched trees where finite exponential moments were assumed.

\subsection{Critical reducible Galton--Watson trees}
Suppose that the type set is given by $\mathfrak{G}= \{1, \ldots, d\}$ for some integer $d \ge 2$.
Let $\bm{\sigma}$ be an ordered $d$-type offspring distribution and $\bm{\xi} = (\bm{\xi}_1, \ldots, \bm{\xi}_d)$ the corresponding unordered offspring distribution with $\bm{\xi}_i = (\xi_{i,1}, \dots, \xi_{i,d})$ for all $1 \le i \le d$. We assume that almost surely
\begin{align}
	\xi_{i,j} =0 \qquad \text{for all $i>j$}.
\end{align}
That is, a particle of type $i$ may only have offspring with types $j \ge i$. We furthermore assume that $\xi_{1,1}$ satisfies $\Pr{\xi_{1,1}=0}>0$, $\Pr{\xi_{1,1}\ge 2} >0$, and
\begin{align}
	\Ex{\xi_{1,1}} = 1.
\end{align}

Recall that for $\kappa := 1$ the random tree $\bm{T}^1$ is distributed like the $\bm{\sigma}$-Galton--Watson tree~$\bm{T}$, only that non-root vertices of type $1$ receive no offspring. We set
\begin{align}
	\xi := \#_1 \bm{T}^1-1 \qquad \text{and} \qquad \zeta := \sum_{i =2}^d \#_i \bm{T}^1,
\end{align}
so that $\xi + \zeta$ is the number of non-root vertices of $\bm{T}^1$ and $\xi \eqdist \xi_{1,1}$.

Let $\bm{S}$ denote a sesqui-type tree started at a type $1$ vertex, with vertices of the first type receiving offspring according to independent copies of $(\xi, \zeta)$, and vertices of the second type being infertile. It follows that
\begin{align}
	\label{eq:idid}
	\left(\#_1 \bm{T}, \sum_{i =2}^d \#_i \bm{T}\right) \eqdist (\#_1 \bm{S}, \#_2 \bm{S}).
\end{align}

We let $\bm{T}_n$ denote the result of conditioning $\bm{T}$ on having $n$ vertices. This is possible when $n \in a + D \ndZ$ is large enough, with $a$ and $D$ defined for $(\xi, \zeta)$ as in Lemma~\ref{le:lemn}. Equation~\eqref{eq:idid} allows us to transfer properties of sesqui-type trees:

%It is clear that properties for sesqui-type trees carry over:%, as $\#_1\bm{S} = \#_1\bm{T}$ and $\#\bm{S} = \#\bm{T}$.
\begin{lemma}
	\label{le:transferred}
	Suppose that additionally $(\xi,\zeta)$ has a finite covariance matrix and that its support is not contained in a straight line. Then Lemma~\ref{le:lembiv} holds analogously for the $d$-type reducible $\bm{\sigma}$-Galton--Watson tree $\bm{T}$, yielding an asymptotic expression for $\Pr{\# \bm{T} = n}$ and a local limit theorem for $\#_1 \bm{T}_n$.
\end{lemma}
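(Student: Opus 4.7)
The plan is to transfer Lemma~\ref{le:lembiv} to $\bm{T}$ through the construction $\bm{T} = \Xi(\bm{S}, \bm{\beta}_{\bm{S}})$, exploiting the fact that $\Xi$ preserves both the total number of vertices $\#$ and the number $\#_1$ of vertices of type $1$. The standing hypotheses on $(\xi,\zeta)$ in Lemma~\ref{le:transferred} (support not on a straight line, finite covariance matrix) together with the global assumptions on $\bm{\xi}$ ($\Pr{\xi=0}>0$, $\Pr{\xi\ge 2}>0$, $\Ex{\xi}=1$) are exactly the hypotheses of Lemma~\ref{le:lembiv}, so the latter applies verbatim to the auxiliary sesqui-type tree $\bm{S}$.

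The key observation is that, by the definition of $\Xi$, every type-$1$ vertex of $\bm{S}$ remains a type-$1$ vertex of $\bm{T}$, while each type-$2$ vertex of $\bm{S}$ is replaced by one vertex of $\bm{T}$ of type in $\{2,\ldots,K\}$ coming from the attached forest $\bm{\beta}_{\bm{S}}(v)$. By the coupling of $\zeta$ with $\bm{\xi}_1$, the total population of the forest attached at a type-$1$ vertex $v$ of $\bm{S}$ equals precisely the number of type-$2$ children of $v$ in $\bm{S}$. Hence no vertices are created or destroyed, which gives
\[
(\#\bm{S}, \#_1\bm{S}) \eqdist (\#\bm{T}, \#_1\bm{T}).
\]
In particular the lattice invariants $a$ and $D$ from Lemma~\ref{le:lemn} coincide for $\bm{S}$ and $\bm{T}$, one has $\Pr{\#\bm{T}=n} = \Pr{\#\bm{S}=n}$ for all $n$, and $\mfL(\#_1\bm{T} \mid \#\bm{T}=n) = \mfL(\#_1\bm{S} \mid \#\bm{S}=n)$ whenever this is defined.

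Lemma~\ref{le:lembiv} applied to $\bm{S}$ then yields, upon pullback, the lattice support description, the $n^{-3/2}$ polynomial asymptotic for $\Pr{\#\bm{T}=n}$, and the local/central limit theorem for $\#_1\bm{T}_n$, with the same constants $\mu = 1/(1+\Ex{\zeta})$ and $\sigma^2 = \det\bm{\Sigma}/(\Va{\xi}(1+\Ex{\zeta})^3)$ as in the original statement. I do not expect any substantial obstacle: the only point requiring care is the vertex-preservation identity for $\Xi$, which is immediate from the definition of the canonical decoration. No separate cycle-lemma computation is needed for the $K$-type tree $\bm{T}$, since all estimates are inherited from $\bm{S}$ through the distributional identity above.
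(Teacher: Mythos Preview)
Your proposal is correct and matches the paper's own justification exactly: the paper does not give a formal proof of Lemma~\ref{le:transferred} but simply remarks, immediately before stating it, that ``properties for sesqui-type trees carry over, as $\#_1\bm{S} = \#_1\bm{T}$ and $\#\bm{S} = \#\bm{T}$.'' Your argument makes this explicit, and the one substantive point you take care with --- that the conditional total population of $\bm{\beta}_{\bm{S}}(v)$ equals the number of type-$2$ children of $v$ in $\bm{S}$, so $\Xi$ preserves both $\#$ and $\#_1$ --- is precisely what underlies the paper's one-line claim.
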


In particular,
\begin{align}
	\frac{\#_1 \bm{T}_n}{n} \convp \frac{1}{1 + \Ex{\zeta}}.
\end{align}
We may also verify concentration for the remaining types:
\begin{lemma}
	Under the same assumptions as in Lemma~\ref{le:transferred}, it follows that
	\begin{align}
		\frac{\#_\gamma \bm{T}_n}{n} \convp \frac{\Ex{ \#_\gamma \bm{T}^1}}{1 + \Ex{\zeta}}
	\end{align}
	for all types $2 \le \gamma \le d$. Here we restrict $n$ to lie in the lattice $a + D \ndZ$.
\end{lemma}
\begin{proof}
	Let $\epsilon>0$ be given. Using the central limit theorem for $\#_1 \bm{T}_n$ from Lemma~\ref{le:transferred}, we may select $M>0$ large enough so that 
	\begin{align}
		\label{eq:notin}
		\Pr{\#_1 \bm{T}_n \notin I_n} < \epsilon
	\end{align}
	for $I_n := n/(1 + \Ex{\zeta}) \pm M \sqrt{n}$ and all large enough $n$. 	Let $\bm{T}^{1}_1, \bm{T}^{1}_2, \ldots$ denote independent copies of $\bm{T}^{1}$ and set for each $i \ge 1$
	\begin{align}
		\xi_i= \#_1 \bm{T}^1_i, \qquad \rho_i = \sum_{\substack{2  \le k \le d \\ k \ne \gamma}} \#_k \bm{T}^1_i, \qquad \text{and} \qquad \tau_i = \#_\gamma \bm{T}_i^1.
	\end{align}
	Let $\delta>0$ be given and set $J_n = (1 \pm \delta)\frac{\Ex{ \#_\gamma \bm{T}^1}}{1 + \Ex{\zeta}}n$. As $\Pr{\# \bm{T} =n} = \Theta(n^{-3/2})$, we may write
	\begin{align}
		\label{eq:cc1}
		\Pr{\#_1 \bm{T}_n \in I_n, \#_\gamma \bm{T}_n \notin J_n} &= O(n^{3/2}) \Pr{\#_1 \bm{T} \in I_n, \#_\gamma \bm{T} \notin J_n, \# \bm{T} = n}.
	\end{align}
	Arguing analogously as for $\eqref{eq:nlpre}$ and $\eqref{eq:nl}$, it follows that
	\begin{multline}
		\label{eq:cc2}
		\Pr{\#_1 \bm{T} \in I_n, \#_\gamma \bm{T} \notin J_n, \# \bm{T} = n} \\= \sum_{\ell \in I_n} \frac{1}{\ell} \Prb{\sum_{i=1}^\ell (\xi_i -1, \rho_i + \tau_i +1) = (-1,n),  \sum_{i=1}^\ell \tau_i \notin J_n }.
	\end{multline}
	Consider the collection 
	\begin{align}
		Q_n = \{(-1,a,b) \mid a,b \in \ndN_0, a+ b +1 = n,b \notin J_n\}.
	\end{align}
	This way,
	\begin{multline}
		\label{eq:cc3}
		\Prb{\sum_{i=1}^\ell (\xi_i -1, \rho_i + \tau_i +1) = (-1,n),  \sum_{i=1}^\ell \tau_i \notin J_n } \\= \Prb{\sum_{i=1}^\ell (\xi_i -1, \rho_i, \tau_i) \in Q_n}.
	\end{multline}
	Clearly $Q_n$ has at most $n$ elements, and each vector $(-1,a,b) \in Q_n$ satisfies
	\begin{align}
		\| (-1,a,b) - \ell (0, \Ex{\rho_1}, \Ex{\tau_1}) \|_2 \ge |b - \ell \Ex{\tau_1}| \ge \Theta(n)
	\end{align} 
	uniformly for $\ell \in I_n$. Using the local limit theorem in Proposition~\ref{pro:llt} for $3$ dimensions, it follows that
	\begin{align}
		\label{eq:cc4}
		\Prb{\sum_{i=1}^\ell (\xi_i -1, \rho_i, \tau_i) \in Q_n} 	&= o(n^{-3/2}).
	\end{align}
	Combining Equations~\eqref{eq:cc1},~\eqref{eq:cc2},~\eqref{eq:cc3} and~\eqref{eq:cc4} it follows that
	\begin{align}
		\Pr{\#_1 \bm{T}_n \in I_n, \#_\gamma \bm{T}_n \notin J_n} \to 0.
	\end{align}
	Using~\eqref{eq:notin} and the fact that $\epsilon>0$ was arbitrary, it follows that
	\begin{align}
		\Pr{\#_\gamma \bm{T}_n \notin J_n}  \to 0.
	\end{align}
\end{proof}

This allows us to apply Theorem~\ref{te:convfringe3}, yielding:

%For $\gamma \in \{1,2\}$ we may define the tree $\hat{\bm{S}}(1,\gamma)$ with a root vertex and an infinite backwards growing spine like the tree $\hat{\bm{T}}(1,\gamma)$, just for $\tilde{\bm{\sigma}}$ instead of $\bm{\sigma}$. See Remark~\ref{re:fu} for the case $\gamma=1$. Applying the transformation $\Xi$ to $\hat{\bm{S}}(1,1)$ yields a tree with a marked vertex and an infinite backwards growing spine consisting only of type $1$ vertices. If 

% We may construct the canonical decoration~$\bm{\beta}_{\hat{\bm{S}}(1, \gamma)}$ of $\hat{\bm{S}}(1,\gamma)$ analogously as for $\bm{S}$. There is a canonical bijection between the vertices of $\hat{\bm{S}}(1,\gamma)$ and its blow-up $\Xi\left(\hat{\bm{S}}(1, \gamma), \bm{\beta}_{\hat{\bm{S}}(1, \gamma)}\right)$, making $\Xi\left(\hat{\bm{S}}(1, \gamma), \bm{\beta}_{\hat{\bm{S}}(1, \gamma)}\right)$ a tree with a root and an infinite backwards growing spine.  
%It follows from Theorem~\ref{te:daone}:
\begin{theorem}
	\label{te:transferred}
	Let $\mathfrak{G}_0 \subset \{1, \ldots, d\}$ denote a non-empty subset.
	Let $v_n$ be uniformly selected among all vertices of $\bm{T}_n$ with type in $\mathfrak{G}_0$.
	We set $p(i) = \Ex{\#_i \bm{T}^1}$ for $i \in \mathfrak{G}_0\setminus\{1\}$, and $p(1) =1$ in case $1 \in \mathfrak{G}_0$.
	Under the same assumptions as in Lemma~\ref{le:transferred}, it follows that
	\begin{align}
		\mfL( (\bm{T}_n, v_n) \mid \bm{T}_n) \convp \mathfrak{L}(\hat{\bm{T}}(1, \eta))
	\end{align} 
	as $n \in a + D \ndN$ tends to infinity. Here $\eta$ denotes an independent random type from $\mathfrak{G}_0$, with distribution given by
	\begin{align}
		\Pr{\eta= j} = p(j) / \sum_{i  \in \mathfrak{G}_0} p(i), \qquad j \in \mathfrak{G}_0.
	\end{align}
\end{theorem}

\subsection{Regular critical irreducible Galton--Watson trees}
\label{sec:regcrit}

Suppose that the type set $\mathfrak{G}= \{1, \ldots, d\}$ is finite. Suppose that the ordered offspring distribution $\bm{\sigma}$ corresponds to the unordered offspring distribution  $\bm{\xi} = (\bm{\xi}_i)_{1 \le i \le d}$ with $\bm{\xi}_i = (\xi_{i,1}, \ldots, \xi_{i,d})$ for all $1 \le i \le d$.
The mean matrix
$
\bm{A} := (\Ex{\xi_{i,j}})_{1 \le i,j \le d}
$
is said to be \emph{finite}, if each coordinate is finite. We say $\bm{\xi}$ and the associated  branching process is \emph{irreducible}, if for any types $i,j$ there is an integer $k \ge 1$ such that the $(i,j)$th entry of $\bm{A}^k$ is positive. The Perron--Frobenius theorem ensures that in this case the spectral radius $\lambda$ of $\bm{A}$  is also an eigenvalue. If $\lambda=1$ (or $<1$ or $>1$), we say $\bm{\xi}$ / $\bm{\sigma}$ and the associated branching process is \emph{critical} (or subcritical or supercritical). By~\cite[Thm. 2 on page 186]{MR0373040}, a $\bm{\sigma}$-Galton--Watson tree $\bm{T}$ is almost surely finite (regardless with which type we start) if its critical or subcritical. If $\bm{\xi}$ is critical and has finite exponential moments, we say it is \emph{regular critical}.

Let $\bm{\omega} = (\omega_i)_{1 \le i \le d} \in \ndN_{0}^d \setminus \{\bm{0}\}$ be fixed. Given a type $1 \le \kappa \le d$ we are interested in the tree $\bm{T}_n$ obtained by conditioning the tree $\bm{T}$ started with a deterministic root type $\kappa$ on the event that
\begin{align}
	|\bm{T}|_{\bm{\omega}} := \sum_{i=1}^d \omega_i \,  \#_i \bm{T} = n.
\end{align}
It was shown by~\cite[Prop. 2.2]{MR3769811} that there is an integer $D \ge 1$ such that  $|\bm{T}|_{\bm{\omega}}$ is contained in some lattice of the form $\alpha_\kappa + D \ndZ$, and conversely any sufficiently large integer from the lattice is contained in the support. \cite[Sec. 4.3]{MR3769811} showed furthermore that if $\bm{\xi}$ is regular critical, then
\begin{align}
	\label{eq:obs1}
	\Pr{|\bm{T}|_{\bm{\omega}} = n} \sim c_{\bm{\omega}, \kappa} n^{-3/2}
\end{align}
for some explicitly given constant $c_{\bm{\omega}, \kappa}>0$ as $n \in \alpha_\kappa + D \ndZ$ tends to infinity. By \cite[Prop. 2.1]{MR3769811} (see also \cite{MR2469338}) it holds for any type $1 \le \gamma \le d$
\begin{align}
	\label{eq:obs2}
	\Ex{\#_\gamma \bm{T}^\gamma} =2
\end{align}
and $\#_\gamma \bm{T}^\gamma$ has finite exponential moments. Moreover, \cite[Lem. 6.7]{MR3769811} showed that for all integers $1 \le \gamma \le d$
\begin{align}
	\label{eq:obs3}
	\frac{\#_{\gamma} \bm{T}_n}{n} \convp c_{\gamma}
\end{align}
for a constant $c_{\gamma}>0$ that does not depend on $\kappa$. For any subset $\mathfrak{G}_0 \subset \{1, \ldots, d\}$ let $v_n$ be a uniformly selected vertex of $\bm{T}_n$ with type in $\mathfrak{G}_0$ and let $\eta$ denote an independent random type that assumes a type $\gamma$ with probability proportional to $c_{\gamma}$. Equations~\eqref{eq:obs1} and \eqref{eq:obs2} allow us to apply Theorem~\ref{te:convfringe}, yielding local convergence for the vicinity of a random vertex of type $\gamma$. Equation~\eqref{eq:obs3} entails consequently local convergence for the vicinity of $v_n$ to the corresponding mixture of limit objects:

\begin{theorem}
	\label{te:datwo}
	Suppose that $\bm{\xi}$ is regular critical. Then
	\begin{align}
		\mfL( (\bm{T}_n, v_n) \mid \bm{T}_n) \convp \mathfrak{L}(\hat{\bm{T}}(\eta))
	\end{align}
	as $n \in \alpha_\kappa + D \ndZ$ tends to infinity.  The limit object does not depend on the type $\kappa$ with which we started the branching process. Each type recurs almost surely infinitely many often along the backwards growing spine of the limit.
\end{theorem}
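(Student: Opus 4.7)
The plan is to apply Theorem~\ref{te:convfringe} once for each type $\gamma \in \mathfrak{G}_0$ (with $\kappa$ in that theorem replaced by $\gamma$) and then combine the resulting local limits into a mixture governed by the asymptotic type frequencies. Note that Theorem~\ref{te:convfringe3} does not apply directly here, since its ratio hypothesis~\eqref{eq:fo} would require $\Ex{\#_\gamma \bm{T}^\kappa} = c_\gamma / c_\kappa$ for $\gamma \ne \kappa$, and this identity is not a consequence of~\eqref{eq:obs3} alone.

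For each fixed $\gamma$, the criticality condition $\Ex{\#_\gamma \bm{T}^\gamma} = 2$ demanded by Theorem~\ref{te:convfringe} is exactly~\eqref{eq:obs2}. To check Assumption~\eqref{eq:boundassumption} with $\cE_n = \{|\bm{T}|_{\bm{\gamma}} = n\}$, take for instance $s_n = \log^2 n$: the polynomial lower bound $\Pr{\cE_n} \sim c_{\bm{\gamma},\kappa} n^{-3/2}$ provided by~\eqref{eq:obs1} makes $\exp(-\epsilon s_n) = o(\Pr{\cE_n})$ automatic, while $\Pr{\#_\gamma \bm{T}_n \ge s_n} \to 1$ is immediate from~\eqref{eq:obs3}, which in fact shows that $\#_\gamma \bm{T}_n$ is of exact order $n$. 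Theorem~\ref{te:convfringe} then yields, for each $\gamma \in \mathfrak{G}_0$,
\begin{align*}
\mathbb{P}((\bm{T}_n, v_n^\gamma) \mid \bm{T}_n) \convp \mathfrak{L}(\hat{\bm{T}}(\gamma)),
\end{align*}
where $v_n^\gamma$ is a uniformly selected type-$\gamma$ vertex of $\bm{T}_n$.

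Conditionally on $\bm{T}_n$, the law of $(\bm{T}_n, v_n)$ is the finite convex mixture
\begin{align*}
\mathbb{P}((\bm{T}_n, v_n) \in \cdot \mid \bm{T}_n) = \sum_{\gamma \in \mathfrak{G}_0} \frac{\#_\gamma \bm{T}_n}{\sum_{\gamma' \in \mathfrak{G}_0} \#_{\gamma'} \bm{T}_n} \, \mathbb{P}((\bm{T}_n, v_n^\gamma) \in \cdot \mid \bm{T}_n).
\end{align*}
By~\eqref{eq:obs3} the weights converge in probability to $p_\gamma := c_\gamma / \sum_{\gamma' \in \mathfrak{G}_0} c_{\gamma'}$, which is precisely the prescribed law of $\eta$. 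A Slutsky-type argument, carried out coordinatewise in the topology on $\mathfrak{T}^\bullet$ (that is, projection by projection onto $f^{[h]}$), combines these ingredients into
\begin{align*}
\mathbb{P}((\bm{T}_n, v_n) \mid \bm{T}_n) \convp \sum_{\gamma \in \mathfrak{G}_0} p_\gamma \, \mathfrak{L}(\hat{\bm{T}}(\gamma)) = \mathfrak{L}(\hat{\bm{T}}(\eta)).
\end{align*}
Independence from the initial type $\kappa$ is then automatic, since both $c_\gamma$ and $\hat{\bm{T}}(\gamma)$ are intrinsic to $\bm{\xi}$. The assertion that each type recurs infinitely often along the infinite geodesic of $\hat{\bm{T}}(\gamma)$ is a property of the limit itself: the segment of an independent copy of $\hat{\bm{T}}^\gamma$ connecting two consecutive spine vertices contains any fixed other type with positive probability thanks to irreducibility of $\bm{\xi}$, and a Borel--Cantelli argument concludes.

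The only real obstacle is the last projection-by-projection verification that the quenched convergence of the components $\mathbb{P}((\bm{T}_n, v_n^\gamma) \in \cdot \mid \bm{T}_n)$ together with the in-probability convergence of the mixture weights $\#_\gamma \bm{T}_n / \sum_{\gamma'} \#_{\gamma'} \bm{T}_n$ assembles into quenched convergence of the mixture in the topology defined on $\mathfrak{T}^\bullet$. This is routine but is the one nontrivial bookkeeping step that goes beyond invoking Theorem~\ref{te:convfringe}.
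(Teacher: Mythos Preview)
Your proposal is correct and follows essentially the same route as the paper: the paper's argument (given in the paragraph immediately preceding the theorem) applies Theorem~\ref{te:convfringe} separately for each type $\gamma$ using \eqref{eq:obs1} and \eqref{eq:obs2}, and then invokes \eqref{eq:obs3} to pass to the mixture---precisely as you do. Your write-up simply makes the verification of Assumption~\eqref{eq:boundassumption} and the mixture step explicit, and your remark that Theorem~\ref{te:convfringe3} is not directly applicable here (since \eqref{eq:obs3} gives the ratio $c_\gamma/c_\kappa$ rather than $\Ex{\#_\gamma \bm{T}^\kappa}$) is a correct and useful observation that the paper leaves implicit.
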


\begin{proposition}
	If  $|\mathfrak{G}_0|=1$ and if the vector $\bm{\omega}$ has only one non-zero coordinate, then Theorem~\ref{te:datwo} still holds if we only require $\bm{\xi}$ to be critical.
\end{proposition}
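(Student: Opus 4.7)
The plan is to apply Theorem~\ref{te:convfringe2} via its relaxation in Proposition~\ref{pro:strengthened}, taking as critical type the unique coordinate $\delta$ with $\gamma_\delta > 0$ and as target type the unique element $\gamma_0 \in \mathfrak{G}_0$. Since $|\bm{T}|_{\bm{\gamma}} = \gamma_\delta \#_\delta \bm{T}$, the event $\cE_n := \{|\bm{T}|_{\bm{\gamma}} = n\}$ coincides with $\{\#_\delta \bm{T} = m\}$ for $m := n/\gamma_\delta$. Equation~\eqref{eq:obs2} yields $\Ex{\#_\delta \bm{T}^\delta} = 2$ under critical irreducibility alone, so~\eqref{eq:critassumption} is at hand. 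For the boundedness assumption~\eqref{eq:boundassumption}, observe that by~\eqref{eq:ttasdf}--\eqref{eq:oins} the variable $\#_\delta \bm{T}$ is the total progeny of a monotype critical Galton--Watson process with offspring mechanism $\#_\delta \bm{T}^\delta - 1$ started from $\#_\delta \bm{T}^\kappa - \one_{\kappa = \delta}$ initial particles; standard tail estimates (e.g.~\cite[Thm.~18.1]{MR2908619}) give $\Pr{\cE_n}$ a decay that is at most polynomial in $m$. Since $\#_\delta \bm{T}_n = m$ is deterministic, the choice $s_n = m$ makes both parts of~\eqref{eq:boundassumption} trivial.

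If $\gamma_0 = \delta$, Theorem~\ref{te:convfringe} applies directly. Assume henceforth $\gamma_0 \ne \delta$. Irreducibility of $\bm{\xi}$ gives $\Ex{\#_{\gamma_0} \bm{T}^\delta} > 0$, and finiteness holds because $\bm{T}^\delta$ away from the root is a branching process killed at $\delta$, whose mean matrix is the principal submatrix of $\bm{A}$ obtained by removing the $\delta$-row and column; by Perron--Frobenius this submatrix has spectral radius strictly below~$1$. It remains to verify the relaxed growth condition~\eqref{eq:relaxmate}, after which Proposition~\ref{pro:strengthened} finishes the job.

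To this end, fix $\epsilon > 0$ and choose a finite family $\cF$ of pairs $(T', v')$, where $T'$ is a finite $\mathfrak{G}$-type tree with root type $\delta$ and $v'$ is a type $\gamma_0$ vertex of $T'$ whose path to the root meets no further vertex of type $\delta$, such that
\begin{align*}
\sum_{(T', v') \in \cF} \Pr{\bm{T}(\delta) = T'} \ge \Ex{\#_{\gamma_0} \bm{T}^\delta} - \epsilon.
\end{align*}
This is possible because summing over all such pairs yields $\Ex{\#_{\gamma_0} \bm{T}^\delta}$ by Fubini. Distinct pairs in $\cF$ count disjoint sets of type $\gamma_0$ vertices of $\bm{T}_n$ --- namely those whose nearest type $\delta$ ancestor witnesses $(T', v')$ --- hence deterministically
\begin{align*}
\#_{\gamma_0} \bm{T}_n \ge \sum_{(T', v') \in \cF} N_{(T', v')}(\bm{T}_n).
\end{align*}
Dividing by $\#_\delta \bm{T}_n$ and invoking Lemma~\ref{le:multfringe} together with~\eqref{eq:Neq}, the right-hand side converges in probability to $\sum_{(T', v') \in \cF} \Pr{\bm{T}(\delta) = T'} \ge \Ex{\#_{\gamma_0} \bm{T}^\delta} - \epsilon$. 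Letting $\epsilon \downarrow 0$ gives~\eqref{eq:relaxmate}.

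Proposition~\ref{pro:strengthened} then yields $\mathbb{P}((\bm{T}_n, v_n) \mid \bm{T}_n) \convp \mfL(\hat{\bm{T}}(\delta, \gamma_0))$. Since $|\mathfrak{G}_0|=1$ forces $\eta = \gamma_0$ almost surely, and in the irreducible critical regime $\hat{\bm{T}}(\delta, \gamma_0)$ and $\hat{\bm{T}}(\gamma_0)$ have the same distribution (every type recurs infinitely often along the spine and the choice of criticality pivot used to construct the sin-tree is immaterial), this matches the limit object in Theorem~\ref{te:datwo}. The main obstacle will be the truncation step leading to~\eqref{eq:relaxmate}: the disjointness underlying the deterministic lower bound on $\#_{\gamma_0} \bm{T}_n$ and the dominating role of $\Ex{\#_{\gamma_0}\bm{T}^\delta} < \infty$ both rely on subcriticality of the branching process killed at $\delta$.
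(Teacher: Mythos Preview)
Your proof is correct and follows the same overall strategy as the paper's: verify Assumption~\eqref{eq:boundassumption} via heavy-tailedness of the critical monotype reduction, establish the relaxed lower bound~\eqref{eq:relaxmate}, and then invoke Proposition~\ref{pro:strengthened} (the paper also notes the equivalent option of switching the pivot to~$\gamma_0$ and using Theorem~\ref{te:convfringe} directly, which you do not pursue). Your remark that $\hat{\bm{T}}(\delta,\gamma_0)\eqdist\hat{\bm{T}}(\gamma_0)$ in the irreducible critical setting matches what the paper asserts at the end of its proof.

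The one genuine difference is in how you obtain~\eqref{eq:relaxmate}. The paper writes $\#_{\gamma_0}\bm{T}_n$ as the sum $\#_{\gamma_0}\bm{T}^\alpha+\sum_{i=1}^{n-\one_{\alpha=\delta}}\#_{\gamma_0}\bm{T}^\delta_i$ from~\eqref{eq:typegamma}, truncates each summand at a level~$D$, and applies the Azuma--Hoeffding inequality together with the heavy-tail of $\Pr{\cE_n}$. You instead recycle Lemma~\ref{le:multfringe}: you pick a finite family~$\cF$ of pointed trees $(T',v')$ with $v'$ of type~$\gamma_0$ and no intermediate $\delta$-vertex, observe that these count disjoint sets of type-$\gamma_0$ vertices, and let the fringe-subtree concentration do the work. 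This is a clean reuse of machinery already in place and avoids repeating a concentration argument; the paper's route is more direct but essentially reproves a special case of what Lemma~\ref{le:multfringe} already encodes. Your Perron--Frobenius justification of $\Ex{\#_{\gamma_0}\bm{T}^\delta}<\infty$ is also self-contained where the paper simply cites \cite[Prop.~2.1]{MR3769811}. (Minor stylistic point: the final sentence beginning ``The main obstacle will be\ldots'' reads like a leftover planning note and should be removed.)
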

\begin{proof}
	Without loss of generality we may assume that $\bm{\omega} = (1,0,\ldots, 0)$. It holds that
	\begin{align}
		\Ex{\#_i \bm{T}^i} = 2
	\end{align}
	and
	\begin{align}
		\Ex{\#_i \bm{T}^j} < \infty
	\end{align}
	for all types $i,j$ by \cite[Prop. 2.1]{MR3769811}. If $\bm{T}$ starts with a vertex of type $1$, then $\#_1 \bm{T}$ is distributed like the population in a critical mono-type Galton--Watson tree with offspring distribution $\#_1 \bm{T}^1-1$. In particular, $\Pr{\#_1 \bm{T} = n} = \exp(o(n))$. If $\bm{T}$ starts with a vertex of a different type, then  $\#_1 \bm{T}$  is distributed like the sum of a random number of independent copies of populations of $(\#_1 \bm{T}^1-1)$-branching processes. Also in this case, $\Pr{\#_1 \bm{T}=n} = \exp(o(n))$.
	
	For each $i$ let $v_n^i$ denote a uniformly selected vertex of type $i$ of $\bm{T}_n$.  Since $\#_1 \bm{T}_n = n$ and $\bm{T}_n$ is obtained from $\bm{T}$ by conditioning on an event with probability $\exp(o(n))$, we may apply
	Theorem~\ref{te:convfringe} to obtain
	\begin{align}
		\mfL( (\bm{T}_n, v_n^1) \mid \bm{T}_n) \convp \mathfrak{L}(\hat{\bm{T}}(1)).
	\end{align} 
	Let $\alpha$ denote the deterministic root type with which we started the branching process. Let $\gamma \ne 1$ be a type. It follows from Equations~\eqref{eq:red2mono} and \eqref{eq:oins} that
	\begin{align}
		\Pr{\#_\gamma \bm{T}_n = \ell } = \frac{\Pr{\#_\gamma \bm{T}^\alpha + \sum_{i=1}^{n - \one_{\alpha=1}} \#_\gamma \bm{T}_i^1 = \ell, \#_1 \bm{T} = n} }{\Pr{\#_1 \bm{T} = n}},
	\end{align}
	with $(\bm{T}^1_i)_{i \ge 1}$ denoting independent copies of $\bm{T}^1$. Truncating the summands and using the Azuma--Hoeffding inequality (and again the fact that $\Pr{\#_1 \bm{T}=n} =  \exp(o(n))$), it follows that for any integer $D \ge 1$ and any $\epsilon>0$ it holds with high probability that
	\[
	\frac{\#_\gamma \bm{T}_n}{n} \ge \Ex{\#_1 \bm{T}_i^1, \#_1 \bm{T}_i^1 \le D} - \epsilon.
	\]
	As this holds for arbitrarily large $D$ and as $\Ex{\#_1 \bm{T}_i^1} < \infty$, it follows that
	\begin{align}
		\frac{\#_\gamma \bm{T}_n}{n} \ge \Ex{\#_1 \bm{T}_i^1} - o_p(1).
	\end{align}
	This allows us to apply~Theorem~\ref{te:convfringe} for $\kappa= \gamma$ to obtain
	\begin{align}
		\mfL( (\bm{T}_n, v_n^\gamma) \mid \bm{T}_n) \convp \mathfrak{L}(\hat{\bm{T}}(\gamma)).
	\end{align} 
	%	Alternatively, we could also have applied Proposition~\ref{pro:strengthened} to obtain
	%	\begin{align}
	%		\mfL( (\bm{T}_n, v_n^\gamma) \mid \bm{T}_n) \convp \mathfrak{L}(\hat{\bm{T}}(1,\gamma)) ).
	%	\end{align} 
	%	In the present setting (but not in general) it holds that $\hat{\bm{T}}(1,\gamma) \eqdist \hat{\bm{T}}(\gamma)$, so there is no difference.
\end{proof}

\subsection{Critical Galton--Watson trees conditioned by typed populations}

Let the type set be given by $\mathfrak{G} = \{1, \ldots, d\}$ and suppose that the unordered offspring distribution $\bm{\xi}$ corresponding to $\bm{\sigma}$ is irreducible and critical. Let $\Lambda \subset \ndN$ be an infinite subset. Let
\begin{align}
	\bm{k}(n) = (k_1(n), \ldots, k_d(n))
\end{align}
be a sequence in $\bm{\ndN}_0^d$ satisfying
\begin{align}
	\Pr{(\#_i \bm{T})_{1 \le i \le d} = \bm{k}(n)} > 0
\end{align}
for all $n \in \Lambda$. We assume that
\begin{align}
	\label{eq:aso}
	\frac{\bm{k}(n)}{\| \bm{k}(n)\|_1} \to \bm{a}
\end{align}
for some $\bm{a} = (a_i)_{1 \le i \le d} \in \ndR^d_{\ge 0} \setminus \{\bm{0}\}$ as $n \in \Lambda$ tends to infinity.
Let $\alpha$ be a random type. We let $\bm{T}_n$ denote the result of conditioning the $\bm{\sigma}$-Galton--Watson tree started with a particle of type $\alpha$ on the event
\begin{align}
	\label{eq:cond}
	(\#_i \bm{T})_{1 \le i \le d} = \bm{k}(n).
\end{align}
Let $1 \le \kappa \le d$ be a coordinate satisfying $a_\kappa >0$.  By \cite[Prop. 2.1]{MR3769811} (see also \cite{MR2469338}) it holds 
\begin{align}
	\label{eq:obs5}
	\Ex{\#_\kappa \bm{T}^\kappa} = 2.
\end{align}
It follows by Equations~\eqref{eq:red2mono} and~\eqref{eq:oins} that the event~\eqref{eq:cond} implies that a random number of critical $(\#_\kappa \bm{T}^\kappa-1)$-Galton--Watson trees has total size $k_\kappa(n)$. The total population of a critical mono-type Galton--Watson tree assumes an integer $n$ with probability $\exp(o(n))$, yielding for any $\epsilon>0$
\begin{align}
	\exp(- \epsilon k_\kappa(n)) = o(\Pr{(\#_i \bm{T})_{1 \le i \le d} = \bm{k}(n)}).
\end{align}
This verifies Assumption~\eqref{eq:boundassumption}. Let $v_n^\kappa$ denote a uniformly selected vertex of type $\kappa$ of the tree $\bm{T}_n$. It follows by Theorem~\ref{te:convfringe} that 
\begin{align}
	\mfL( (\bm{T}_n, v_n^\kappa) \mid \bm{T}_n) \convp \mathfrak{L}(\hat{\bm{T}}(\kappa)).
\end{align} 
Using~\eqref{eq:aso}, we deduce:
\begin{theorem}
	\label{te:dathree}
	Let $\mathfrak{G}_0 \subset \mathfrak{G}$ be a subset such that $a_i>0$ for at least one type $i \in \mathfrak{G}_0$.  Let $\eta$ be an independent random type from $\mathfrak{G}_0$ drawn with probability $\Pr{\eta= i} = a_i / \sum_{j \in \mathfrak{G}_0} a_j$.  Let $v_n$ denote a vertex of $\bm{T}_n$ that is uniformly selected among all vertices with type in $\mathfrak{G}_0$. Then
	\begin{align}
		\mfL( (\bm{T}_n, v_n) \mid \bm{T}_n) \convp \mathfrak{L}(\hat{\bm{T}}(\eta)).
	\end{align} 
\end{theorem}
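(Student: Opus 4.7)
The plan is to reduce this mixed-type statement to the fixed-type local limit already established in the paragraph preceding the theorem. For each $\kappa \in \mathfrak{G}_0$ with $a_\kappa > 0$, the discussion above verifies Assumption~\eqref{eq:boundassumption} (using that $\#_\kappa \bm{T}$ is heavy-tailed, since it is distributed as a sum of populations of critical mono-type $(\#_\kappa \bm{T}^\kappa - 1)$-Galton--Watson processes) together with $\Ex{\#_\kappa \bm{T}^\kappa} = 2$, and then applies Theorem~\ref{te:convfringe} to conclude
\[
	\mathbb{P}((\bm{T}_n, v_n^\kappa) \mid \bm{T}_n) \convp \mathfrak{L}(\hat{\bm{T}}(\kappa)).
\]
What remains is to decompose the selection of $v_n$ according to its type and pass to the limit.

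The key observation is that under the conditioning event $(\#_i \bm{T})_{1 \le i \le d} = \bm{k}(n)$, the counts $\#_\gamma \bm{T}_n = k_\gamma(n)$ are deterministic. Consequently, the type $\tau_n$ of $v_n$ takes value $\gamma \in \mathfrak{G}_0$ with probability $k_\gamma(n) / \sum_{j \in \mathfrak{G}_0} k_j(n)$, which by~\eqref{eq:aso} converges to $p_\gamma := a_\gamma / \sum_{j \in \mathfrak{G}_0} a_j = \Pr{\eta = \gamma}$. Moreover, conditionally on $\tau_n = \gamma$, $v_n$ is uniform among the type-$\gamma$ vertices of $\bm{T}_n$, so its conditional law coincides with that of $v_n^\gamma$. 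This yields the pathwise identity
\[
	\mathbb{P}((\bm{T}_n, v_n) \in \cdot \mid \bm{T}_n) = \sum_{\gamma \in \mathfrak{G}_0} \frac{k_\gamma(n)}{\sum_{j \in \mathfrak{G}_0} k_j(n)}\, \mathbb{P}((\bm{T}_n, v_n^\gamma) \in \cdot \mid \bm{T}_n).
\]

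Since $\mathfrak{G}_0$ is finite, the conclusion follows by a termwise Slutsky-type argument in the topology of $\mathfrak{T}^\bullet$, which reduces to convergence of the probability mass assigned to each finite marked tree via the projections $f^{[h]}$. For $\gamma \in \mathfrak{G}_0$ with $a_\gamma > 0$, both factors converge (the scalar to $p_\gamma$ deterministically, the random measure in probability by the fixed-type result), while for $\gamma \in \mathfrak{G}_0$ with $a_\gamma = 0$ the scalar factor tends to zero and the random-measure factor is trivially bounded by one, so those summands vanish in probability. Summing gives the announced mixture $\mathfrak{L}(\hat{\bm{T}}(\eta))$. There is no substantial obstacle here: once Theorem~\ref{te:convfringe} is in force, the argument reduces to bookkeeping of a mixture decomposition, and the handling of null-weight types $a_\gamma = 0$ in $\mathfrak{G}_0$ is the only mildly delicate point.
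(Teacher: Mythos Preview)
Your argument is correct and matches the paper's own (very terse) proof, which consists of the words ``Using~\eqref{eq:aso}, we deduce'' after establishing the type-by-type convergence via Theorem~\ref{te:convfringe}. You have simply written out the mixture decomposition and the Slutsky step that the paper leaves implicit, including the handling of types $\gamma \in \mathfrak{G}_0$ with $a_\gamma = 0$.
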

See~\cite[Thm. 3.1]{MR3803914} for a limit in a similar setting that describes the asymptotic vicinity of the root.

\section*{Acknowledgement}
I  thank the referees  for the thorough reading, helpful recommendations, and corrections.

% BibTeX users please use one of
%\bibliographystyle{spbasic}      % basic style, author-year citations
%\bibliographystyle{spmpsci}      % mathematics and physical sciences
%\bibliographystyle{spphys}       % APS-like style for physics
%\bibliography{}   % name your BibTeX data base

% Non-BibTeX users please use
%\begin{thebibliography}{}
%
% and use \bibitem to create references. Consult the Instructions
% for authors for reference list style.
%
%\bibitem{RefJ}
% Format for Journal Reference
%Author, Article title, Journal, Volume, page numbers (year)
% Format for books
%\bibitem{RefB}
%Author, Book title, page numbers. Publisher, place (year)
% etc
%\end{thebibliography}

\end{document}